\theoremstyle{definition}
\newtheorem{theorem}{Theorem}[section]
\newtheorem{proposition}[theorem]{Proposition}
\newtheorem{corollary}[theorem]{Corollary}
\newtheorem{lemma}[theorem]{Lemma}
\newcommand{\G}{\Gamma}
\newcommand{\Z}{\mathbb Z}
\newcommand{\acts}{\curvearrowright}
\newcommand{\K}{\kappa}
\begin{document}
\title[Continuous Edge Chromatic Numbers]{Continuous Edge Chromatic Numbers of Abelian Group Actions}

\author{Su Gao}
\address{School of Mathematical Sciences and LPMC, Nankai University, Tianjin 300071, P.R. China}
\email{sgao@nankai.edu.cn}
\thanks{The first author acknowledges the partial support of his research by the National Natural Science Foundation of China (NSFC) grants 12250710128 and 12271263.}

\author{Ruijun Wang}
\address{School of Mathematical Sciences and LPMC, Nankai University, Tianjin 300071, P.R. China}
\email{rwang@mail.nankai.edu.cn}

\author{Tianhao Wang}
\address{School of Mathematical Sciences and LPMC, Nankai University, Tianjin 300071, P.R. China}
\email{Tianhao\_Wang@qq.com}

\subjclass[2020]{Primary 03E15; Secondary 05C15, 05C70}

\keywords{Cayley graph, Schreier graph, Bernoulli shift action, edge chromatic number, marker region lemma}

\date{}

\maketitle

\begin{abstract}
We prove that for any generating set $S$ of $\mathbb {Z}^n$, the continuous edge chromatic number $\chi'_c(G)$ of the Schreier graph of the Bernoulli shift action $G=F(S,2^{\mathbb{Z}^n})$ is $\chi'(G)+1=|S|+1$. In particular, for the standard generating set, the continuous edge chromatic number of $F(2^{\mathbb {Z}^n})$ is $2n+1$. 
\end{abstract}

\section{Introduction}
In the seminal paper \cite{KST99}, Kechris, Solecki and Todorcevic initiated the study of descriptive combinatorics of locally finite Borel graphs. In particular, they considered general bounds for Borel chromatic numbers of locally finite Borel graphs. The Borel edge chromatic number was also considered, primarily as the Borel chromatic number of the dual graph, and they showed that any locally finite Borel graph $G$ has countable Borel edge chromatic number $\chi'_B(G)$, and if $G$ is of bounded degree $\leq k$, then $\chi'_B(G)\leq 2k-1$. Subsequently, researchers computed the Borel chromatic numbers and Borel edge chromatic numbers of various Schreier graphs of countable group actions. For example, in \cite{Marks}, Marks studied combinatorics of free products of two marked groups, and showed that the Borel chromatic number $\chi_B(F(2^{\mathbb F_n}))=2n+1$, where $F(2^{\mathbb{F}_n})$ denotes the Schreier graph on the free part of the Bernoulli shift action of $\mathbb{F}_n$, the free group with $n$ generators, on $2^{\mathbb{F}_n}$. For an overview of the entire field of descriptive combinatorics we refer the reader to the survey \cite{KM} by Kechris and Marks.

The continuous chromatic number was first studied by the first author of the present paper and Jackson. In \cite{GJ15}, it was shown that there is a continuous proper $4$-coloring of $F(2^{\mathbb Z^n})$ for each $n>1$. Later in \cite{GJKS23}, it was shown that there is no continuous proper $3$-coloring of $F(2^{\mathbb Z^n})$ for each $n>1$. Thus the continuous chromatic number $\chi_c(F(2^{\mathbb Z^n}))=4$ for each $n>1$. In contrast, it was shown in \cite{CJMST} and \cite{GJKS24} by different methods that the Borel chromatic number $\chi_B(F(2^{\mathbb Z^n}))=3$ for each $n>1$. This was the first time that the Borel and continuous chromatic numbers of a locally finite Borel graph were observed to be different. 

A similar phenomenon occurs when considering the Borel and continuous edge chromatic numbers of $F(2^{\mathbb{Z}^n})$. The Borel edge chromatic numbers were studied independently in \cite{BHT} (for $n=2$), \cite{CU}, \cite{GR23} and \cite{Weilacher}. The conclusion is that for the Schreier graph of $F(2^{\mathbb{Z}^n})$ with the standard generating set, the Borel edge chromatic number $\chi'_B(F(2^{\mathbb{Z}^n}))$ equals the usual edge chromatic number $\chi'(F(2^{\mathbb{Z}^n}))$, which is $2n$. It is worth noting that Weilacher \cite{Weilacher} showed that for any symmetric generating set $S$ of $\mathbb{Z}^n$ where $S$ does not contain the identity, letting $F(S, 2^{\mathbb{Z}^n})$ be the corresponding Schreier graph, we have $\chi'_B(F(S,2^{\mathbb{Z}^n}))=\chi'(F(S,2^{\mathbb{Z}^n}))=|S|$. Thus the computation of the Borel edge chromatic numbers of $F(2^{\mathbb{Z}^n})$ was complete. As for the continuous edge chromatic number, we only had, from \cite{GJKS23}, the result for $n=2$ that $\chi'_c(F(2^{\mathbb Z^2}))=5$ for the standard generating set. This already showed that the Borel and continuous edge chromatic numbers could differ, but left the computation of a lot of cases open. 

In this paper, we complete this line of research by showing that, for any symmetric generating set $S$ of $\mathbb{Z}^n$ not containing the identity, the continuous edge chromatic number $\chi'_c(F(S,2^{\mathbb{Z}^n}))=\chi'(F(S,2^{\mathbb Z^n}))+1=|S|+1$. In particular, we have $\chi'_c(F(2^{\mathbb{Z}^n}))=2n+1$. This answers Question 10.3 of \cite{GS}.

\begin{theorem}\label{thm:1.1}
    For any integer $n\geq 1$ and any symmetric generating set $S$ of $\mathbb {Z}^n$ not containing the identity, let $G=F(S,2^{\mathbb{Z}^n})$. Then $\chi'_c(G)=\chi'(G)+1=|S|+1$.
\end{theorem}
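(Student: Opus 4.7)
The plan is to prove the two inequalities $\chi'_c(G) \geq |S|+1$ and $\chi'_c(G) \leq |S|+1$ separately; the identity $\chi'(G) = |S|$ for the abstract Schreier graph is standard, and for the Bernoulli shift is recorded in the work of Weilacher cited in the introduction.

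For the upper bound, I would use the marker region machinery signaled by the title keywords. Concretely, the goal is to construct a clopen partition $\{R_\alpha\}$ of $F(S, 2^\Gamma)$ into ``rectangular'' marker regions whose internal Schreier subgraph is isomorphic to a large finite box in the Cayley graph of $\mathbb{Z}^n$, and such that each vertex is incident to at most one boundary edge (i.e.\ edge joining two distinct regions). Given such a partition, color the interior of each region by a fixed proper $|S|$-edge-coloring of a Cayley box (which exists since $\chi' = |S|$ for the abstract graph), and color every boundary edge with the extra color $|S|+1$. The resulting coloring is continuous because both the partition and the coloring rule inside each region are locally determined, and it is proper because at each vertex the ``$|S|+1$'' color is used at most once.

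For the lower bound, the plan is a proof by contradiction. Suppose $c$ were a continuous proper $|S|$-edge-coloring. Since the Schreier graph is $|S|$-regular, each color class is a clopen perfect matching; equivalently, to each vertex $x$ one associates a bijection $c_x : S \to \{1,\dots,|S|\}$ which, by continuity and compactness of $2^{\Gamma}$, depends only on the restriction of $x$ to a uniform finite window $W \subseteq \Gamma$. Pick a free hyperaperiodic element $x_0 \in 2^{\Gamma}$ and examine the coloring induced on its orbit; this coloring depends on $x_0$ only through its finite-window values along the orbit, and therefore behaves in a ``sub-shift of finite type'' fashion along suitable one-dimensional sub-lattices of $\mathbb{Z}^n$. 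A combinatorial analysis along a two-dimensional frame built from two independent pairs $\{s, -s\}, \{t, -t\} \subseteq S$ should then reveal a parity or flow obstruction generalizing the $n=2$ argument of \cite{GJKS23}, yielding the desired contradiction.

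The delicate part is the lower bound, and in particular making the argument robust in two directions simultaneously: across all dimensions $n$ and across all symmetric generating sets $S$. The presence of ``diagonal'' generators such as $\pm(e_i+e_j)$ creates short cycles (triangles, squares) that interact nontrivially with the local coloring constraint and may require care when identifying the right sub-lattice frame on which to run the two-dimensional obstruction. The upper bound, by contrast, is morally standard once a marker region lemma producing regions with the ``at most one boundary edge per vertex'' property is available; I would expect the paper to do most of its technical work in constructing such marker regions and in pinning down the finite-window obstruction for the lower bound.
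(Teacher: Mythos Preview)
Your upper bound proposal has a genuine gap: the marker-region property you want---each vertex incident to at most one boundary edge---cannot be achieved for $n\geq 2$, since any vertex at a corner of a rectangular region is incident to boundary edges in at least two independent directions, and your single extra color would collide there. The paper's construction is substantially more delicate. For the standard generating set, each rectangular marker region is colored so that every edge crossing the boundary in direction $e_i$ receives a fixed direction-dependent color $c_i$ (the ``boundary condition''), while the one genuinely extra color $n+1$ is confined to a $2\times\cdots\times 2$ \emph{core} deep inside the region (the ``core condition'', Lemmas~\ref{lem:bc1}--\ref{middle}). For a general $S$, one decomposes $S=\bigcup_i S_i$ into maximal linearly independent layers, runs the rectangle construction on each layer with disjoint color sets, and then \emph{shifts} the cores (Lemma~\ref{lem:shiftedcore}) via a pigeonhole argument so that the single shared extra color never lands on adjacent edges coming from different layers. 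This is the technical heart of the paper.

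Your lower bound reduction to ``no continuous perfect matching'' is correct, but the obstruction is not a two-dimensional frame or flow argument. The paper encodes a putative continuous perfect matching as a continuous equivariant map into a $\mathbb{Z}^n$-subshift of finite type and invokes Theorem~\ref{thm:weakneg} (from \cite{GJKS23}): any such map must descend to the finite torus $T_{q_1^k,\dots,q_n^k}$ for all large $k$. Taking all $q_i$ odd yields a torus of odd cardinality, which admits no perfect matching. This handles every $n$ and every $S$ uniformly and is quite short; contrary to your expectation, the difficulty in this paper lies almost entirely in the upper bound.
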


The rest of the paper is organized as follows. In Section~\ref{sec:2} we fix the notation to be used throughout the paper. In Section~\ref{sec:3} we prove the lower bound for the continuous edge chromatic number. Then in Section~\ref{sec:4} we construct a proper edge coloring to witness Theorem~\ref{thm:1.1},  first by assuming $S$ to be the standard generating set and then generalizing to the general case.

{\em Acknowledgment.} We thank the anonymous referees for useful comments and suggestions which improved the paper.

\section{Preliminaries\label{sec:2}}
In this section we define our basic notions and fix notation. We use standard concepts and terminology from graph theory, which can be found in \cite{Bo}.

As usual, a {\em graph} $G$ is a pair $(V(G),E(G))$, where $V(G)$ is a set and $E(G)$ is a set of unordered pairs of distinct elements of $V(G)$. Thus our graphs are {\em undirected}. Here $V(G)$ is the set of {\em vertices} of $G$, and $E(G)$ is the set of {\em edges} of $G$. Throughout the paper, when there is no danger of confusion, the vertex operator $V$ of $V(G)$ is sometimes omitted. If $x, y\in V(G)$ are distinct and $e=\{x,y\}\in E(G)$, we also write $e=(x,y)=(y,x)$ and say that $x$ and $y$ are {\em incident} with the edge $e$. For any $x\in V(G)$, the {\em degree} of $x$ is defined as the number of distinct edges incident with $x$. A graph $G$ is {\em locally finite} if for all $x\in V(G)$, the degree of $x$ is finite. If $x,y\in V(G)$ are distinct, then a {\em path} $p$ from $x$ to $y$ is a sequence $x_0, x_1,\dots, x_k$ of distinct vertices of $G$ such that $x_0=x$, $x_k=y$, and for all $0\leq i<k$, $(x_i,x_{i+1})\in E(G)$; here $k$ is the {\em length} of the path $p$.

For a graph $G$ and a set $\K$ of colors, a {\em proper edge $\K$-coloring} of $G$ is a map $c: E(G)\to \K$ such that $c((x,y))\neq c((z,w))$ if the distinct edges $(x,y),(z,w)$ have a vertex in common. In the sequel, we write $c(x,y)$ for $c((x,y))$ for simplicity. The {\em edge chromatic number} of $G$, denoted by $\chi'(G)$, is the least cardinality of a set $\K$ such that there exists a proper edge $\K$-coloring for $G$. When the graph $G$ is a topological graph, i.e., when $V(G)$ is a topological space, we may consider {\em continuous} (or {\em Borel}) proper edge colorings of $G$, and define its {\em continuous edge chromatic number}, denoted as $\chi'_c(G)$, and its {\em Borel edge chromatic number}, denoted as $\chi'_B(G)$. Similarly, we define proper (vertex) colorings and the continuous and Borel chromatic numbers, denoted by $\chi_c(G)$ and $\chi_B(G)$ respectively.

On any locally finite graph $G$, we define the {\em path distance} $\rho$ on $G$ as follows. If $x,y\in V(G)$ are in the same connected component (i.e. there is a path from $x$ to $y$), then let $\rho(x,y)$ be the length of a shortest path from $x$ to $y$; otherwise let $\rho(x,y)=\infty$. Recall that a locally finite graph $G$ is {\em regular} if the degrees of all vertices of $G$ are the same.

An important class of regular topological graphs consists of the graphs induced by actions of finitely generated groups. A {\em marked group} is a pair $(\Gamma, S)$, where $\Gamma$ is a group and $S$ is a finite generating set of $\Gamma$. Usually we also require $S=S^{-1}$ and $1_\Gamma\not\in S$. In this paper this is assumed tacitly. When the generating set is standard or otherwise understood, we omit specifying the set $S$ and say that $\Gamma$ is a marked group. For example, for any $n\geq 1$, $\mathbb{Z}^n$ has a standard generating set $\{\pm e_i\colon 1\leq i\leq n\}$ where each $e_i$ has the $i$-th coordinate $1$ and $j$-th coordinate $0$ for $j\neq i$. The {\em Cayley graph} $C(\Gamma, S)$ of a marked group $(\Gamma, S)$ is defined by
$$ V(C(\Gamma, S))=\Gamma $$
and
$$ E(C(\Gamma, S))=\{(g,h)\in \Gamma^2\,:\, \exists s\in S\ h=sg\}. $$
When there is an action of a marked group $\Gamma$ on a set $X$, the {\em Schreier graph} of the action $\Gamma\acts X$ on $X$, denoted $G(\Gamma, S,X)$, is defined by
$$ V(G(\Gamma, S,X))=X $$
and
$$ E(G(\Gamma, S,X))=\{(x,y)\in X^2\,:\, \exists s\in S\ y=s\cdot x\}. $$

The Schreier graph will be particularly nice when the action is free; in this case the Schreier graph on each orbit of the action will be a copy of the Cayley graph $C(\Gamma,S)$.  Let $F(\Gamma, X)=\{ x \in X\colon  \forall g \neq 1_\G \ (g\cdot x \neq x)\}$ be the {\em free part} of the action $\Gamma\curvearrowright X$. $F(\Gamma, X)$ is an invariant set and the induced action of $\Gamma$ on $F(\Gamma, X)$ is free.  We denote by $F(\Gamma,S,X)$ the Schreier graph on $F(\Gamma, X)$. We write $F(X)$ for $F(\Gamma, X)$ and $F(S, X)$ for $F(\Gamma, S, X)$ if the group $\Gamma$ and its action on $X$ are understood. Furthermore, when the generating set $S$ is standard or otherwise understood, we will abuse notation and simply write $F(X)$ for the Schreier graph $F(S, X)$.

In this paper, we will work with the setup where $X$ is a Polish space, i.e., a separable and completely metrizable space, $\Gamma$ is a finitely generated group with the discrete topology, and the action $\Gamma\curvearrowright X$ is continuous. 

The Bernoulli shift action induces a natural Schreier graph. Let $A$ be a finite set with the discrete topology. Assume $|A|\geq 2$. For a countable discrete group $\G$, the space $A^\G$ is equipped with the usual product topology and is homeomorphic to the Cantor space $2^\mathbb N=\{0,1\}^{\mathbb{N}}$, which is compact Polish.

The {\em Bernoulli shift action} $\G\curvearrowright A^\G$ is defined by
\[
(g \cdot x)(h)= x(hg)
\]
for $g,h \in \G$ and $x \in A^\G$. 
This action is continuous. The free part $F(\Gamma, A^\Gamma)$ is an invariant dense $G_\delta$ subset of $A^\Gamma$, hence is a Polish space. 

We are now ready to compute the continuous edge chromatic number $\chi'_c(F(S,2^{\mathbb{Z}^n}))$ for a generating set $S$ of $\mathbb{Z}^n$.

\section{The Lower Bound\label{sec:3}}

In this section we prove that for any generating set $S$ of $\mathbb{Z}^n$, $\chi'_c(F(S, 2^{\mathbb{Z}^n}))>\chi'(F(S,2^{\mathbb{Z}^n}))$. We will need some results of \cite{GJKS23}. To state the results we need to review some related concepts.

Let $(\G, S)$ be a marked group. We will consider the Bernoulli shift action of $\G$ on $A^\G$ for some finite set $A$. A {\em $\G$-subshift} $Y$ is a closed invariant subset of $A^{\G}$. A {\em $\G$-pattern} is a map $p: F\to A$ for some finite $F\subseteq\G$. If $p$ is a $\G$-pattern and $x\in A^\G$, we say that $p$ {\em occurs} in $x$ if $p\subseteq g\cdot x$ for some $g\in \G$. Note that this is an invariant notion, that is, if $p$ occurs in $x$ then $p$ occurs in any $h\cdot x$ for $h\in\G$. A {\em $\G$-subshift of finite type} is a $\G$-subshift $Y\subseteq A^\G$ for which there is a finite set $\{p_1,\dots, p_k\}$ of $\G$-patterns such that for any $x\in A^\G$, $x\in Y$ if and only if none of the patterns $p_1,\dots, p_k$ occur in $x$. In this case we say that $Y$ is {\em described} by $(A; p_1,\dots, p_k)$.

A graph $H$ is a {\em $(\G,S)$-graph} if there is an action of $\G$ on $V(H)$ such that $H=G(\G, S, V(H))$. The Schreier graphs $G(\G,S,X)$ and $F(\G,S,X)$ are examples of $(\G,S)$-graphs, but we will also consider finite $(\G,S)$-graphs. When the generating set $S$ is standard, $(\G,S)$-graphs are also simply called {\em $\G$-graphs}. 

We will work with the following example of a finite $\mathbb{Z}^n$-graph. Given positive integers $q_1,\dots, q_n$, the vertex set of the graph $T_{q_1,\dots, q_n}$ is the quotient of $\Z^n$ by the
relation $(a_1,\dots,a_n)\sim (b_1,\dots,b_n)$ if and only if $a_1 \equiv b_1\!\mod q_1$, $\dots,a_n \equiv b_n\!\mod q_n$. The group $\mathbb{Z}^n$ acts naturally on $T_{q_1,\dots, q_n}$, and this action induces a $\mathbb{Z}^n$-graph on $T_{q_1,\dots, q_n}$ with the standard generating set of $\mathbb{Z}^n$. Note that $T_{q_1,\dots, q_n}$ has cardinality $q_1\cdots q_n$.

If $Y$ is a $\mathbb{Z}^n$-subshift of finite type described by $(A; p_1,\dots p_k)$, then we say that a map
$\varphi \colon T_{q_1,\dots,q_n}\to A$ {\em respects} $Y$ if none of $p_1,\dots, p_k$ occur in $\varphi\circ \pi\in A^{\mathbb{Z}^n}$, where $\pi \colon \Z^n \to T_{q_1,\dots,q_n}$ is the quotient map.

If a group $\G$ acts on both $X$ and $Y$, we say a map $\theta\colon X\to Y$ is
{\em equivariant} if $\theta(g\cdot x)=g \cdot \theta(x)$ for all $ g\in \G$ and $x \in X$.

\begin{theorem}\label{thm:weakneg}(\cite[Theorem 2.7.1]{GJKS23})
Let $A$ be a finite set and let $Y\subseteq A^{\Z^n}$ be a $\mathbb{Z}^n$-subshift of finite type. If there is a continuous equivariant map
$\theta\colon F(2^{\Z^n})\to Y$, then for all $q_1,\dots,q_n\geq 2$ and all sufficiently large $k$ there is
$\varphi\colon T_{q^k_1,\dots,q^k_n}\to A$ which respects $Y$.
\end{theorem}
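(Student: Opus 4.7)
The plan is to combine continuity of $\theta$ into the finite alphabet $A$, which forces $\theta$ to be given by a finite-window block code on any compact invariant piece of $F(2^{\Z^n})$, with an almost-periodicity argument on a minimal subflow, in order to assemble a $q_i^k$-periodic $A$-valued pattern avoiding all forbidden patterns defining $Y$.

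The first step is to pass to a compact invariant subset. Since $F(2^{\Z^n})$ contains hyperaperiodic points (whose orbit closures consist entirely of aperiodic points), pick such a point $z$ and let $K$ be any minimal subflow of the orbit closure $\overline{\Z^n\cdot z}$. Then $K$ is compact, $\Z^n$-invariant, and contained in $F(2^{\Z^n})$. On the compact $K$, continuity of $\theta$ together with the finiteness of $A$ forces $\theta$ to be a uniform block code of some radius $R$: for every $x\in K$, $\theta(x)(0)$ is determined by $x\restriction B_R(0)$, and by equivariance $\theta(x)(v)$ is determined by $x\restriction (v+B_R(0))$.

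Next, fix $q_1,\dots,q_n\geq 2$, and let $s$ be an upper bound on the support-radii of the forbidden patterns defining $Y$. For sufficiently large $k$, the goal is to produce a $q_i^k$-periodic pattern $\tilde z\in 2^{\Z^n}$ every one of whose $(R+s)$-radius windows is realized in some element of $K$. Granting such a $\tilde z$, define $\tilde\varphi\colon \Z^n\to A$ by $\tilde\varphi(v)=\theta(x_v)(v)$, where $x_v\in K$ is any element agreeing with $\tilde z$ on $v+B_R(0)$. The block code makes this independent of the choice of $x_v$; the $q_i^k$-periodicity of $\tilde z$ makes $\tilde\varphi$ descend to a well-defined map $\varphi\colon T_{q_1^k,\dots,q_n^k}\to A$; and the $(R+s)$-window condition lets us, around any potential occurrence site of a forbidden pattern, realize the relevant values of $\tilde\varphi$ as values of $\theta(x)$ for a single $x\in K$, so no forbidden pattern can occur (since $\theta(x)\in Y$).

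The main obstacle is producing the periodic $\tilde z$. By minimality of $K$, for each $x\in K$ and each finite window $W$ the set of return times $\{g\in\Z^n:(g\cdot x)\restriction W=x\restriction W\}$ is syndetic, so once the prescribed periods $q_i^k$ are much larger than the syndeticity constants associated to the $(R+s)$-ball one should be able to align the recurrences with the period lattice and periodize a carefully chosen pattern in some $x\in K$ whose boundary sits at return times. A sometimes cleaner alternative is a compactness/contradiction argument: if for some fixed $q_1,\dots,q_n$ no $\varphi$ existed for infinitely many $k$, extracting a limit pattern together with the uniform block code would force some element of $\theta(F(2^{\Z^n}))$ to contain a forbidden pattern, contradicting $\theta(F(2^{\Z^n}))\subseteq Y$.
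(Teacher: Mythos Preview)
First, note that the present paper does not give its own proof of this theorem: it is quoted verbatim from \cite{GJKS23} and used as a black box. So there is no in-paper argument to compare against, only the original argument in \cite{GJKS23}.

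Your overall architecture matches that source: pass to a compact invariant $K\subseteq F(2^{\Z^n})$ (the orbit closure of a hyperaperiodic point), observe that on $K$ the map $x\mapsto\theta(x)(\bar 0)$ is a sliding block code of some radius $R$, and then look for a $q_i^k$-periodic $\tilde z\in 2^{\Z^n}$ all of whose $(R+s)$-windows occur in $K$, so that the block code applied window-by-window to $\tilde z$ descends to $T_{q_1^k,\dots,q_n^k}$ and avoids every forbidden pattern. The first two steps and the verification at the end are correct as you wrote them.

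The genuine gap is the construction of $\tilde z$. Syndetic recurrence in a minimal $K$ tells you only that the return times to any fixed window form a relatively dense subset of $\Z^n$; it gives no control over the \emph{arithmetic} of those return times. There is no reason any return vector should have $i$-th coordinate an exact multiple of $q_i^k$, and periodizing a block of $x\in K$ at a merely nearby return vector produces the wrong torus. The phrase ``align the recurrences with the period lattice'' conceals a real obstruction: a generic minimal subshift of $2^{\Z^n}$ has no periodic points whatsoever in its closure, so there is no mechanism by which an arbitrary $K$ would manufacture periods of a prescribed shape. Your compactness/contradiction alternative has the same defect in reverse: any limit of $q_i^k$-periodic configurations is itself periodic, hence lies outside $F(2^{\Z^n})$, so the block code does not apply to it and no contradiction is reached.

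What \cite{GJKS23} does is invert the order of choices. Rather than fixing an arbitrary minimal $K$ and hoping for compatible return times, one \emph{constructs} a specific hyperaperiodic $x\in 2^{\Z^n}$ tailored to the given $q_1,\dots,q_n$, built so that for every sufficiently large $k$ some translate of $x$ agrees with a $q_i^k$-periodic configuration on a box whose side lengths dwarf $R+s$. One then takes $K$ to be the orbit closure of this particular $x$; the required locally periodic windows are present in $K$ by design, and your block-code and verification steps finish the argument exactly as you outlined. The missing ingredient, then, is not a sharpening of the recurrence reasoning but an explicit hyperaperiodic construction carrying the periods $q_i^k$ inside it.
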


Recall that a {\em  matching} of a graph $G$ is a subset $M\subseteq E(G)$ such that every vertex of $G$ is incident with at most one of the edges of $M$. A matching $M$ of $G$ is a {\em perfect matching} if every vertex of $G$ is incident with exactly one edge in $M$. We may view a perfect matching of a graph $G$ as a function $\mu: V(G)\to V(G)$ with the property that for any $x\in V(G)$, $(x,\mu(x))\in E(G)$ and $\mu^2(x)=x$. In this sense we may speak of continuous perfect matchings for topological graphs $G$. The following theorem is a generalization of \cite[Theorem 3.1.3]{GJKS23} with a similar proof.

\begin{theorem}\label{thm:ncpm} Let $S$ be a generating set of $\mathbb{Z}^n$. Then there is no continuous perfect matching of $F(S,2^{\mathbb Z^n})$.
\end{theorem}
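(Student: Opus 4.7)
The plan is to assume a continuous perfect matching $\mu$ of $F(S,2^{\mathbb{Z}^n})$ exists and derive a contradiction via Theorem~\ref{thm:weakneg}, by pulling it back to a finite $\mathbb{Z}^n$-quotient whose cardinality is odd (where parity forbids any perfect matching). The strategy is essentially the same as in \cite[Theorem 3.1.3]{GJKS23}, but carried out with a general generating set $S$.

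First I would encode $\mu$ as a continuous equivariant factor map into an appropriate subshift of finite type. Because the action on $F(S,2^{\mathbb{Z}^n})$ is free and $(x,\mu(x))$ is an edge of the Schreier graph, there is a unique $s(x)\in S$ with $\mu(x)=s(x)\cdot x$; continuity of $\mu$ together with finiteness of $S$ makes $x\mapsto s(x)$ locally constant, hence continuous. I would then define $\theta\colon F(S,2^{\mathbb{Z}^n})\to S^{\mathbb{Z}^n}$ by $\theta(x)(g)=s(g\cdot x)$. A short check using $(g\cdot x)(h)=x(hg)$ shows $\theta$ is equivariant. The involution identity $\mu^2=\mathrm{id}$ together with freeness yields the local constraint $s(s(x)\cdot x)=-s(x)$ (writing $\mathbb{Z}^n$ additively), so the image of $\theta$ lies in the subshift $Y\subseteq S^{\mathbb{Z}^n}$ described by forbidding, for each $s\in S$ and each $t\in S\setminus\{-s\}$, the pattern $p_{s,t}\colon\{0,s\}\to S$ with $p_{s,t}(0)=s$ and $p_{s,t}(s)=t$. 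Since $S$ is finite, this is a finite list of forbidden patterns, so $Y$ is an SFT.

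Next I would apply Theorem~\ref{thm:weakneg} with $q_1=\cdots=q_n=3$ to obtain, for all sufficiently large $k$, a map $\varphi\colon T_{3^k,\dots,3^k}\to S$ that respects $Y$. Respecting $Y$ forces the self-map $\mu_\varphi(v)=\varphi(v)+v$ of $T_{3^k,\dots,3^k}$ to satisfy $\mu_\varphi\circ\mu_\varphi=\mathrm{id}$; once $k$ is large enough that no element of $S$ has all coordinates divisible by $3^k$, the map $\mu_\varphi$ is also fixed-point free, so it is a perfect matching of the finite $\mathbb{Z}^n$-graph $T_{3^k,\dots,3^k}$. But this graph has $3^{nk}$ vertices, an odd number, and hence admits no perfect matching. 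This contradiction completes the argument.

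The only real obstacle is setting up the SFT $Y$ correctly and tracking the Bernoulli-shift conventions so that $\theta$ is equivariant and its image really lies in $Y$; once that bookkeeping is in place, the contradiction reduces to a parity count on the finite torus, and all of the descriptive-set-theoretic content is absorbed into Theorem~\ref{thm:weakneg}.
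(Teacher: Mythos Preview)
Your proposal is correct and follows essentially the same approach as the paper's proof: encode the matching as a continuous equivariant map into the SFT $Y\subseteq S^{\mathbb{Z}^n}$ defined by the involution constraint, invoke Theorem~\ref{thm:weakneg}, and derive a parity contradiction on an odd-cardinality torus. Your treatment is in fact slightly more explicit than the paper's in noting that one needs $k$ large enough so that no element of $S$ vanishes modulo $3^k$ (ensuring $\mu_\varphi$ is fixed-point free), a point the paper leaves implicit.
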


\begin{proof}
Suppose $S=\{\pm s_1,\pm s_2,\cdots,\pm s_m\}$, where none of the $s_i$ are the identity 
$\bar{0}$. Rewrite $S=\{u_1,\dots, u_{2m}\}$. Assume a continuous perfect matching $\mu$ of $F(S,2^{\mathbb Z^n})$ exists. Note that the vertex set of $F(S,2^{\mathbb{Z}^n})$ is the same as that of $F(2^{\mathbb{Z}^n})$. Define 
$\eta:F(2^{\mathbb Z^n})\rightarrow S$ by letting $\eta(x)$ to be the unique element $g\in S$ such that $\mu(x)=g\cdot x$. Then $\eta$ is continuous. 

Now define 
$$\theta:F(2^{\mathbb Z^n})\rightarrow S^{\mathbb{Z}^n}$$
by $\theta(x)(g)=\eta(g\cdot x)$ for $x\in F(2^{\mathbb{Z}^n})$ and $g\in \mathbb{Z}^n$. Then $\theta$ is a continuous equivariant map.

Let $Y\subseteq S^{\mathbb{Z}^n}$ be the $\mathbb{Z}^n$-subshift of finite type described by $2m(2m-1)$ many $\mathbb{Z}^n$-patterns $\{p_{i,j}\colon 1\leq i, j\leq 2m, i\neq j\}$, where
$p_{i,j}:\{\bar{0},u_i\}\rightarrow S$ is defined by $p_{i,j}(\bar{0})=u_i$ and $p_{i,j}(u_i)=-u_j$. Then we in fact have $\theta: F(2^{\mathbb Z^n})\rightarrow Y$.

Let $q_1,q_2,\cdots,q_n$ be odd. By Theorem~\ref{thm:weakneg}, for some large enough $k$ there exists $\varphi\colon T_{q_1^k,q_2^k,\cdots,q_n^k}\rightarrow S$ which respects $Y$. Consider the $(\mathbb{Z}^n, S)$-graph with vertex set $T_{q_1^k,q_2^k,\cdots,q_n^k}$, and denote it as $T$. Here since the group $\mathbb{Z}^n$ naturally acts on $T_{q_1^k, q_2^k,\dots, q_n^k}$, the action of each element $s\in S$ on an element $x\in T_{q_1^k,q_2^k,\dots, q_n^k}$ is well defined and hence gives rise to the Schreier graph $T$. As $\varphi$ respects $Y$, $\varphi$ induces a perfect matching of $T$. In fact, if $x\in T$ and $\varphi(x)=s$, then we must have $\varphi(s\cdot x)=-s$ by the definition of $Y$, and hence the edge set $\{(x,s\cdot x)\colon x\in T, s=\varphi(x)\}$ is a perfect matching of $T$. This is a contradiction as the cardinality of $T$ is odd.
\end{proof}

\begin{corollary}\label{cor:lb}
    For any generating set $S$ of $\mathbb {Z}^n$, letting $G=F(S,2^{\mathbb{Z}^n})$, then $\chi'_c(G)>\chi'(G)$.
\end{corollary}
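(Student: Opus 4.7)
\textbf{Proof proposal for Corollary \ref{cor:lb}.}

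The plan is to derive the corollary directly from Theorem \ref{thm:ncpm} via the standard observation that a proper edge coloring of a regular graph that uses exactly the chromatic index splits the edge set into perfect matchings. First I would recall the two relevant facts: the Schreier graph $G = F(S, 2^{\mathbb{Z}^n})$ is $|S|$-regular (since the action is free on its free part, the $|S|$ neighbors $s\cdot x$ for $s\in S$ are pairwise distinct from each other and from $x$), and by Weilacher's result cited in the introduction we have $\chi'(G) = |S|$.

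Next I would argue by contradiction. Suppose $\chi'_c(G) \leq \chi'(G) = |S|$, and fix a continuous proper edge coloring $c: E(G) \to \{1, \ldots, |S|\}$. Since each vertex $x$ is incident with exactly $|S|$ edges and the coloring is proper, these $|S|$ edges must receive $|S|$ distinct colors, hence each color appears exactly once. Thus every color class $M_i = c^{-1}(\{i\})$ is a perfect matching of $G$.

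I would then extract a continuous perfect matching from this data. Define $\mu_i \colon V(G) \to V(G)$ by letting $\mu_i(x)$ be the unique neighbor $y$ of $x$ with $c((x,y)) = i$. For each $s \in S$ the set $A_s = \{x \in V(G) : c((x, s \cdot x)) = i\}$ is clopen, being the preimage of the clopen singleton $\{i\}$ under the continuous map $x \mapsto c((x, s\cdot x))$, and $\{A_s\}_{s \in S}$ is a clopen partition of $V(G)$. On each $A_s$ the map $\mu_i$ agrees with the continuous map $x \mapsto s\cdot x$, so $\mu_i$ is continuous on $V(G)$, and it is involutive because $M_i$ is a matching. Hence $\mu_i$ is a continuous perfect matching of $F(S, 2^{\mathbb{Z}^n})$, contradicting Theorem \ref{thm:ncpm}.

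There is essentially no obstacle to overcome, since Theorem \ref{thm:ncpm} has already done the substantive work; the only point requiring care is the verification that continuity of the edge coloring implies continuity of each induced matching function, which is handled by the clopen partition argument above.
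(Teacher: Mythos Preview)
Your proposal is correct and follows essentially the same approach as the paper: assume a continuous proper edge $|S|$-coloring exists, observe that each color class is a perfect matching by regularity, and contradict Theorem~\ref{thm:ncpm}. The only differences are that the paper takes $\chi'(G)=|S|$ as ``easy to see'' rather than invoking Weilacher, and that you spell out the continuity of the matching function via a clopen partition, a detail the paper leaves implicit.
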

\begin{proof} It is easy to see that $\chi'(G)=\chi'(C(\mathbb{Z}^n, S))=|S|$. 
    Assume there is a continuous proper edge $|S|$-coloring of $F(S,2^{\mathbb Z^n})$. Then every vertex of $F(S,2^{\mathbb{Z}^n})$ has exactly one edge of each color incident with it. Thus, for any color $u\in S$, the set of edges with color $u$ is a continuous perfect matching of $F(S,2^{\mathbb Z^n})$, contradicting Theorem~\ref{thm:ncpm}.
\end{proof}

\section{Continuous Proper Edge Colorings of $F(2^{\Z^n})$\label{sec:4}}
In this section we construct a continuous proper edge $(|S|+1)$-coloring of $F(S, 2^{\mathbb{Z}^n})$ for any $n\geq 1$ and generating set $S$ of $\mathbb{Z}^n$. 

Note that any generating set $S$ of $\mathbb{Z}^n$ has at least $2n$ many elements. Thus throughout the rest of this section we fix once and for all $2n+1$ many colors to be used in the construction of our edge coloring:
$$ c_1,\dots, c_n, 1, \dots, n+1. $$
We will name more colors as needed.

\subsection{Proper edge colorings of a rectangle}

We first work with $n$-dimensional rectangles in $\mathbb{Z}^n$. 

We fix some more terminology about an $n$-dimensional rectangle $R$ in $\mathbb{Z}^n$. We say that $R$ is of {\em size} $a_1\times\cdots\times a_n$ if there is $(b_1,\dots, b_n)\in\mathbb{Z}^n$ such that
$$ R=[b_1, b_1+a_1]\times \cdots \times [b_n,b_n+a_n]. $$
If $R$ is of the above form, then a vertex $x$ in $R$ is said to have {\em coordinates} $(x_1,\cdots,x_n)$ in $R$ if $x=(b_1+x_1,\dots, b_1+x_n)$. We view $R$ as an induced subgraph of the Cayley graph $C(\mathbb{Z}^n)$ of $\mathbb{Z}^n$ with the standard generating set $\{\pm e_i\colon 1\leq i\leq n\}$. We say an edge $(x,y)$ in $C(\mathbb{Z}^n)$ is {\em parallel} to $e_i$ if $e_i+x=y$ or $e_i+y=x$. Let $\partial R$ denote the set of all edges in $R$ which are adjacent to at least one edge in $C(\mathbb{Z}^n)$ that is not in $R$. We call $\partial R$ the {\em boundary} of $R$.  
An edge in $C(\mathbb{Z}^n)$ is said to be {\em adjacent to} $R$ if it is not in $R$ but is adjacent to some edge of $R$. An edge coloring  of $C(\mathbb{Z}^n)$ is said to satisfy the {\em boundary condition} for $R$ if 
\begin{enumerate}
    \item it is a proper edge coloring of $R$ and its adjacent edges;
    \item for all $1\leq i\leq n$, all edges adjacent to $R$ and parallel to $e_i$ have the same color $c_i$.
\end{enumerate}

\begin{figure}[h]
        \centering
        \begin{tikzpicture}[scale=1.5]
            \draw (-2,1) to (-1.73,1);
            \draw (-1.27,1) to (-0.71,1);
            \draw (-0.29,1) to (0.29,1);
            \draw (2,1) to (1.73,1);
            \draw (1.27,1) to (0.71,1);
           
 \draw (-2,0) to (-1.73,0);
            \draw (-1.27,0) to (-0.71,0);
            \draw (-0.29,0) to (0.29,0);
            \draw (2,0) to (1.73,0);
            \draw (1.27,0) to (0.71,0);

          \draw (-2,-1) to (-1.73,-1);
            \draw (-1.27,-1) to (-0.71,-1);
            \draw (-0.29,-1) to (0.29,-1);
            \draw (2,-1) to (1.73,-1);
            \draw (1.27,-1) to (0.71,-1);
            
\draw (1,-2) to (1,-1.73);
            \draw (1,-1.27) to (1,-0.71);
            \draw (1,-0.29) to (1,0.29);
            \draw (1,2) to (1,1.73);
            \draw (1,1.27) to (1,0.71);

\draw (0,-2) to (0,-1.73);
            \draw (0,-1.27) to (0,-0.73);
            \draw (0,-0.27) to (0,0.29);
            \draw (0,2) to (0,1.73);
            \draw (0,1.27) to (0,0.71);

\draw (-1,-2) to (-1,-1.73);
            \draw (-1,-1.27) to (-1,-0.71);
            \draw (-1,-0.29) to (-1,0.29);
            \draw (-1,2) to (-1,1.73);
            \draw (-1,1.27) to (-1,0.71);

            \node[circle, 
 radius=0.25,draw=lightgray] (1) at (1.5,1) {$c_1$};
            \node[circle,
radius=0.25 ,draw=lightgray] (1) at (1.5,-0) {$c_1$};
            \node[circle,
minimum width =0.5 ,
minimum height =0.5 ,draw=lightgray] (1) at (1.5,-1) {$c_1$};
            \node[circle,
minimum width =0.5 ,
minimum height =0.5 ,draw=lightgray] (1) at (-1.5,1) {$c_1$};
            \node[circle,
minimum width =0.5 ,
minimum height =0.5 ,draw=lightgray] (1) at (-1.5,-0) {$c_1$};
            \node[circle,
minimum width =0.5 ,
minimum height =0.5 ,draw=lightgray] (1) at (-1.5,-1) {$c_1$};
            \node[circle,
radius=0.25,draw=gray] (1) at (1,0.5) {3};
            \node[circle,
minimum width =0.5 ,
minimum height =0.5 ,draw=lightgray] (1) at (1,1.5) {$c_2$};
            \node[circle,
minimum width =0.5 ,
minimum height =0.5 ,draw=lightgray] (1) at (1,-1.5) {$c_2$};
            \node[circle,
minimum width =0.5 ,
minimum height =0.5 ,draw=lightgray] (1) at (0.5,1) {1};
            \node[circle,
minimum width =0.5 ,
minimum height =0.5 ,draw=lightgray] (1) at (-0.5,1) {2};
            \node[circle,
minimum width =0.5 ,
minimum height =0.5 ,draw=lightgray] (1) at (-1,0.5) {3};
            \node[circle,
minimum width =0.5 ,
minimum height =0.5 ,draw=lightgray] (1) at (-1,1.5) {$c_2$};
            \node[circle,
minimum width =0.5 ,
minimum height =0.5 ,draw=lightgray] (1) at (-1,-1.5) {$c_2$};
            \node[circle,
minimum width =0.5 ,
minimum height =0.5 ,draw=lightgray] (1) at (-1,-0.5) {1};
            \node[circle,
minimum width =0.5 ,
minimum height =0.5 ,draw=lightgray] (1) at (-0.5,-1) {2};
            \node[circle,
minimum width =0.5 ,
minimum height =0.5 ,draw=lightgray] (1) at (0.5,-1) {1};
            \node[circle,
minimum width =0.5 ,
minimum height =0.5 ,draw=lightgray] (1) at (1,-0.5) {2};
            \node[circle,
minimum width =0.5 ,
minimum height =0.5 ,draw=lightgray] (1) at (-0.5,-0) {2};
            \node[circle,
minimum width =0.5 ,
minimum height =0.5 ,draw=lightgray] (1) at (0.5,-0) {1};
            \node[circle,
radius=0.25,draw=gray] (1) at (-0,-0.5) {$c_1$};
            \node[circle,
minimum width =0.5 ,
minimum height =0.5 ,draw=lightgray] (1) at (-0,0.5) {3};
            \node[circle,
minimum width =0.5 ,
minimum height =0.5 ,draw=lightgray] (1) at (-0,1.5) {$c_2$};
            \node[circle,
minimum width =0.5 ,
minimum height =0.5 ,draw=lightgray] (1) at (-0,-1.5) {$c_2$};
        \end{tikzpicture}
        \caption{An edge coloring satisfying the boundary condition.}
        \label{example}
    \end{figure}
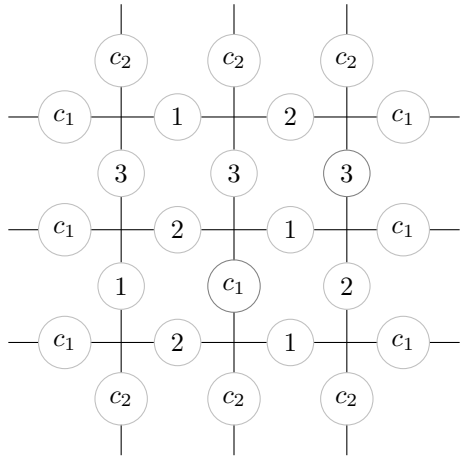

Figure \ref{example} illustrates an example of an edge coloring of a $2$-dimensional rectangle of size $2\times 2$ satisfying the boundary condition.

\begin{lemma}\label{lem:bc1}
    For any $n\geq 1$ and any $n$-dimensional rectangle $R$, there is a proper edge $(2n+1)$-coloring satisfying the boundary condition for $R$.
\end{lemma}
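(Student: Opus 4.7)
The plan is to proceed by induction on $n$. For the base case $n=1$, the rectangle $R=[b_1,b_1+a_1]$ is an interval whose two adjacent external edges are precolored $c_1$; color its $a_1$ internal edges alternately $1,2,1,2,\ldots$, giving a proper $3$-coloring whose endpoint internal edges lie in $\{1,2\}$ and thus differ from $c_1$.

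For the inductive step, write $R = R'\times[b_n,b_n+a_n]$ with $R'$ an $(n-1)$-dimensional rectangle, and apply the induction hypothesis to obtain a proper edge $(2n-1)$-coloring of $R'$ and its adjacent edges using $\{c_1,\ldots,c_{n-1},1,\ldots,n\}$ and satisfying the $(n-1)$-dimensional boundary condition. Apply this coloring identically on every slice $R_t := R'\times\{t\}$ for $b_n\leq t\leq b_n+a_n$, and color every external $e_n$-edge of $R$ by $c_n$. At each $w\in V(R')$, the $2n-2$ slice edges at $w$ leave at least one slice color unused, so fix some $m(w)\in\{c_1,\ldots,c_{n-1},1,\ldots,n\}$ not occurring on any slice edge at $w$.

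The remaining internal $e_n$-edges form disjoint paths $L_w=\{(w,t):t\in[b_n,b_n+a_n]\}$ of length $a_n$. On each $L_w$, assign the alternating pattern $n+1,c_n,n+1,c_n,\ldots,n+1$ when $a_n$ is odd, and $n+1,c_n,\ldots,n+1,m(w)$ (replacing the final $c_n$ with $m(w)$) when $a_n$ is even. Consecutive edges along $L_w$ then differ, and the two endpoint edges of $L_w$ lie in $\{n+1,m(w)\}$, avoiding $c_n$, so they match the external $c_n$-edges at the endpoints of the line. Properness at an arbitrary vertex $v=(w,t)\in V(R)$ is then immediate: the two $e_n$-colors at $v$ lie in $\{c_n,n+1,m(w)\}$, which is disjoint from the slice color set $C_v$ at $v$ (since $c_n$ and $n+1$ lie outside the slice palette, and $m(w)\notin C_v$ by choice, using that the slice coloring is the same on every slice). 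Hence the $2n$ edges at $v$ receive $2n$ distinct colors from $\{c_1,\ldots,c_n,1,\ldots,n+1\}$.

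The main obstacle is the $a_n$ even case, where pure two-color alternation between $n+1$ and $c_n$ along $L_w$ would place $c_n$ on the top edge of $L_w$, conflicting with the external $c_n$-edge above it; substituting $m(w)$ at the top resolves this while keeping the top edge compatible with the slice coloring at the top vertex, since $m(w)$ is unused there by construction.
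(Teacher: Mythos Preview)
Your proof is correct and follows essentially the same inductive strategy as the paper: slice $R$ into layers perpendicular to $e_n$, color all layers identically via the inductive hypothesis using the palette $\{c_1,\ldots,c_{n-1},1,\ldots,n\}$, and then color each vertical line using the unused slice color $m(w)$ together with $n+1$. The only difference is cosmetic: the paper alternates $m(w)$ and $n+1$ along the entire vertical line (so no parity case-split is needed), whereas you alternate $n+1$ and $c_n$ and invoke $m(w)$ only once at the top when $a_n$ is even.
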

\begin{proof}
    By induction on $n$. 
    For $n=1$ the lemma is obvious. For the inductive case $n>1$, consider an $n$-dimensional rectangle $R$ of size $a_1\times\cdots\times a_n$. We only need to define the edge coloring for edges either in $R$ or adjacent to $R$.

    We first divide $R$ into $a_n+1$ many layers as illustrated in Figure \ref{layer}. 
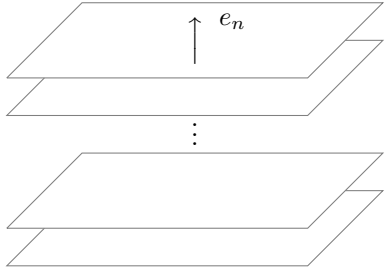
\begin{figure}[h]
        \centering
        \begin{tikzpicture}[yscale=0.5]
            \draw[draw=gray] (0.5,1) to (0,0) to (4,0) to (5,2) to (4.5,2);
            \draw[draw=gray] (0,1) to (4,1) to (5,3) to (1,3) to (0,1);
            \draw[draw=gray] (0.5,5) to (0,4) to (4,4) to (5,6) to (4.5,6);
            \draw[draw=gray] (0,5) to (4,5) to (5,7) to (1,7) to (0,5);
            \node at (2.5,3.25) {.};
            \node at (2.5,3.5) {.};
            \node at (2.5,3.75) {.};
            \node at (2.5,6) {$\Big\uparrow$};
            \node at (3,6.5) {$e_n$};
            
        \end{tikzpicture}
        \caption{Layers as subgraphs of $R$.}
        \label{layer}
    \end{figure}
By the inductive hypothesis, there is a proper edge $(2n-1)$-coloring of each layer of $R$ satisfying the boundary condition. We fix such a proper edge $(2n-1)$-coloring and use it to color all layers of $R$ identically. Note that the colors used so far are among $c_1,\dots, c_{n-1}, 1, \dots, n$, and the remaining uncolored edges are those parallel to $e_n$.
    
    Next, consider an arbitrary edge $(x,y)$ parallel to $e_n$. If $(x,y)$ is adjacent to $R$, then we color it by $c_n$, as required by the boundary condition. Otherwise, assume $(x,y)$ is in $R$. Then $x,y\in R$. Let $C_x$ be the set of colors among $c_1,\dots, c_{n-1}, 1, \dots, n$ which are already used to color an edge incident with $x$. Similarly define $C_y$. Then $C_x=C_y$ and both have cardinality $2n-2$. Thus there exists a color $c^*\in \{c_1,\dots, c_{n-1}, 1, \dots, n\}$ such that $c^*\not\in C_x=C_y$. 

Now the uncolored edges can be decomposed into $(a_1+1)\cdots (a_{n-1}+1)$ many paths parallel to $e_n$, i.e., each path consists of only edges parallel to $e_n$, and so each path is of the form $x, x+e_n, x+2e_n,\dots, x+ke_n$ for some $x$ and postive integer $k$. For each such path $p$ there is a color $c^*\in \{c_1,\dots, c_{n-1}, 1, \dots, n\}$ such that $c^*\not\in C_x$ for all vertices $x$ in $p$. We can then use the two colors $c^*$ and $n+1$ in alternation to color all the edges of $p$, so that the resulting edge coloring is proper.
\end{proof}

\begin{lemma}\label{lem:bc2}
    Let $R$ be an $n$-dimensional rectangle of size $a_1\times\cdots\times a_n$. If $a_i$ is odd for some $1\leq i\leq n$, then there is a proper edge $2n$-coloring satisfying the boundary condition for $R$.
\end{lemma}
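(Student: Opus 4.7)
The plan is to modify the proof of Lemma \ref{lem:bc1} at a single point: the coloring of internal edges parallel to $e_n$. In that proof the extra color $n+1$ was used only to alternate with the ``free'' color $c^*$ along each such path. The key observation is that we can alternate $c^*$ with $c_n$ instead, which is already forced on the boundary edges parallel to $e_n$; this eliminates the need for color $n+1$, but at the cost of requiring both the first and last edges of each alternation to be $c^*$ (otherwise they would clash with the $c_n$-colored boundary edge at the same end of the path). That parity constraint is exactly the requirement that each internal path have odd length, i.e., that $a_n$ be odd.

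First I would, by permuting coordinates and the colors $c_1,\dots,c_n$ accordingly, reduce to the case in which $a_n$ is odd; this reduction is legitimate because the boundary condition is symmetric in the coordinate directions. Then I would run the construction of Lemma \ref{lem:bc1} unchanged up to the point where internal $e_n$-edges get colored: slice $R$ into $a_n+1$ layers of dimension $n-1$, apply the same $(2n-1)$-coloring of Lemma \ref{lem:bc1} to every layer using the colors $c_1,\dots,c_{n-1},1,\dots,n$, and color every boundary edge parallel to $e_n$ with $c_n$. Just as in Lemma \ref{lem:bc1}, because all layers receive identical colorings, each internal path $p$ parallel to $e_n$ has a well-defined unique ``free'' color $c^*\in\{c_1,\dots,c_{n-1},1,\dots,n\}$ that is absent from every layer-edge incident to every vertex of $p$.

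The final step is to color the $a_n$ internal edges of each path $p$ alternately with $c^*$ and $c_n$, starting with $c^*$. Since $a_n$ is odd, the sequence is $c^*,c_n,c^*,\dots,c_n,c^*$, ending with $c^*$. Properness would then be verified edge-by-edge: consecutive internal edges of $p$ use distinct colors $c^*$ and $c_n$; at any vertex of $p$ the layer-edges use colors in $\{c_1,\dots,c_{n-1},1,\dots,n\}\setminus\{c^*\}$, which avoids both $c^*$ and $c_n$; and at an end vertex of $p$ the boundary edge parallel to $e_n$ is $c_n$ while the adjacent internal edge is $c^*$ by the odd-parity assumption. The resulting coloring uses exactly the $2n$ colors $c_1,\dots,c_n,1,\dots,n$ and satisfies the boundary condition. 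The base case $n=1$ handles itself: $R$ is a path of odd length $a_1$, and the internal edges alternate $1,c_1,1,\dots,c_1,1$, which is proper and respects the $c_1$-boundary.

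I do not expect any genuine obstacle beyond keeping the bookkeeping straight in the WLOG reduction and confirming the parity check at the two ends of each internal path; these are both routine given the template already set by Lemma \ref{lem:bc1}.
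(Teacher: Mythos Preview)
Your proposal is correct and follows essentially the same argument as the paper: reduce to $a_n$ odd, color the $a_n+1$ layers identically via Lemma~\ref{lem:bc1}, put $c_n$ on the boundary $e_n$-edges, and alternate $c^*$ with $c_n$ along each internal $e_n$-path, using the odd length to make the parities match at both ends. Your treatment is in fact slightly more careful than the paper's in specifying $c^*\in\{c_1,\dots,c_{n-1},1,\dots,n\}$ rather than allowing $c^*=c_n$.
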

\begin{proof} We will use only the colors $c_1,\dots, c_n, 1,\dots, n$. If $n=1$ it is easy to see that we can use $c_1$ to color the two edges adjacent to $R$ and use $c_1$ and $1$ in alternation to color the edges of $R$; the resulting edge coloring is proper and satisfies the boundary condition.

Suppose $n>1$ and, without loss of generality, assume $a_n$ is odd. Similarly to the proof of Lemma~\ref{lem:bc1}, we divide $R$ into $a_n+1$ many layers perpendicular to $e_n$. By Lemma~\ref{lem:bc1}, there is a proper edge $(2n-1)$-coloring of each layer of $R$ satisfying the  boundary condition for this layer. As in the proof of Lemma~\ref{lem:bc1}, we fix one such edge coloring and use it to color all layers of $R$ identically. We also color any edge that is adjacent to $R$ and parallel to $e_n$ by the color $c_n$, as required by the boundary condition. The remaining uncolored edges are now decomposed into $(a_1+1)\dots (a_{n-1}+1)$ many paths parallel to $e_n$. For each such path $p$ there is a color $c^*\in \{c_1,\dots, c_n,1,\dots, n\}$ such that $c^*\not\in C_x$ for all vertices $x$ in $p$, where $C_x$ is defined similarly as in the proof of Lemma~\ref{lem:bc1}. Note, however, that now the path $p$ has odd length. Thus we can use the two colors $c^*$ and $c_n$ in alternation to color all the edges of $p$, as done in the $n=1$ case. 
\end{proof}

Let $R$ be an $n$-dimensional rectangle of size $a_1\times\cdots\times a_n$, where $a_i$ is a positive even number for each $1\leq i\leq n$. Suppose
$$ R=[b_1,b_1+a_1]\times \cdots\times[b_n,b_n+a_n]$$
for $(b_1,\dots, b_n)\in\Z^n$. Let
$$ K=\left[b_1+\frac{a_1}{2}-1, b_1+\frac{a_1}{2}+1\right]\times \cdots\times\left[b_n+\frac{a_n}{2}-1, b_n+\frac{a_n}{2}+1\right]. $$
Then $K$ is a subrectangle of $R$ of size $2\times\cdots \times 2$, 
 and we call $K$ the {\em core} of $R$. An edge coloring of $C(\mathbb{Z}^n)$ is said to satisfy the {\em core condition} for $R$ if 
\begin{enumerate}
\item it is a proper edge $(2n+1)$-coloring of $R$ and its adjacent edges;
\item the color $n+1$ is only used in coloring some edges of the core of $R$.
\end{enumerate}

\begin{lemma}\label{middle}
Let $k$ be a nonnegative integer and let $d=4k+2$. For any $n$-dimensional rectangle $R$ of size $d\times \cdots\times d$, there is a proper edge $(2n+1)$-coloring satisfying both the boundary condition and the core condition for $R$. 
\end{lemma}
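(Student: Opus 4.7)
The plan is to reduce to Lemmas~\ref{lem:bc1} and~\ref{lem:bc2} by a suitable partition of $R$ into sub-rectangles. For the base case $k=0$ we have $R=K$ so Lemma~\ref{lem:bc1} applied to $R$ already gives a proper edge $(2n+1)$-coloring satisfying the boundary condition, and the use of color $n+1$ is vacuously confined to the core.

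For $k\geq 1$, assume without loss of generality $R=[0,d]^n$ and consider the following $2n+1$ sub-rectangles:
\begin{align*}
K &= [2k,2k+2]^n, \\
S_i^- &= [0,d]^{i-1}\times [0,2k-1]\times [2k,2k+2]^{n-i}, \\
S_i^+ &= [0,d]^{i-1}\times [2k+3,d]\times [2k,2k+2]^{n-i},
\end{align*}
for $1\leq i\leq n$. They partition the vertex set of $R$: any $v\notin K$ lies in exactly the $S_i^s$ for which $i$ is the largest index with $v_i\notin [2k,2k+2]$ and $s$ is $-$ or $+$ according as $v_i\leq 2k-1$ or $v_i\geq 2k+3$. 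Each $S_i^s$ has an odd side of length $2k-1$ in dimension $i$ (this uses $k\geq 1$), so Lemma~\ref{lem:bc2} provides a proper edge $2n$-coloring of $S_i^s$ and its adjacent edges, using only $\{c_1,\dots,c_n,1,\dots,n\}$, satisfying the boundary condition for $S_i^s$. Lemma~\ref{lem:bc1} provides a proper edge $(2n+1)$-coloring of $K$ and its adjacent edges satisfying $K$'s boundary condition.

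These colorings glue into a single coloring of all edges of $R$ and its adjacent edges. Any edge of $R$ with both endpoints in the same sub-rectangle inherits its color there. Any ``cross'' edge, whose endpoints lie in two different sub-rectangles, is adjacent to both; both boundary conditions assign it color $c_j$ (where $e_j$ is its direction), so the assignment is consistent. An edge adjacent to $R$ itself is adjacent to exactly one sub-rectangle, whose boundary condition matches $R$'s. Propriety at a vertex $v$ follows because $v$ belongs to a unique sub-rectangle $R'$, and Lemma~\ref{lem:bc1} or \ref{lem:bc2} applied to $R'$ properly colors every edge incident to $v$, both internal and adjacent. Finally, only the $K$-coloring can use color $n+1$, and $K$ is precisely the core of $R$, so the core condition holds.

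The main bookkeeping point to verify is that each cross edge is adjacent to exactly two sub-rectangles and parallel to the single coordinate direction in which the partition transitions at that edge, so that both adjoining boundary conditions agree on the color $c_j$. This follows from the product structure of the $S_i^s$: partition transitions occur precisely when some coordinate crosses from $[0,2k-1]$ into $[2k,2k+2]$, or from $[2k,2k+2]$ into $[2k+3,d]$, while all other coordinates keep the vertex in the same sub-rectangle, forcing the cross edge to be axis-aligned in that single dimension.
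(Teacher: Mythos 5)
Your proof is correct and follows essentially the same route as the paper: the $2n+1$ pieces $K$, $S_i^\pm$ are exactly the paper's iterated decomposition into $P_i$, $Q_i$, $R_n$ (up to reindexing the coordinates), with the odd side $2k-1$ feeding Lemma~\ref{lem:bc2} and the core feeding Lemma~\ref{lem:bc1}. Your explicit gluing/consistency check for the cross edges is a point the paper leaves implicit, but the argument is the same.
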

\begin{proof} If $k=0$, the core of $R$ is the same as $R$ and the lemma follows immediately from Lemma~\ref{lem:bc1}. For the rest of the proof we assume $k>0$. Again we only define an edge coloring for edges either in $R$ or adjacent to $R$.

Our edge coloring will be defined in $n+1$ many stages. Let $R_0=R$. In stage $1$, we first decompose $R_0$ into three parts $P_1, Q_1, R_1$ as follows. The vertices in $P_1$ have their coordinates in $R_0$ from the set
$$ [0, 2k-1]\times [0,d]\times \cdots \times[0,d];$$
the vertices in $R_1$ have their coordinates in $R_0$ from the set
$$ [2k,2k+2]\times [0,d]\times\cdots\times[0,d];$$
the vertices in $Q_1$ have their coordinates in $R_0$ from the set
$$ [2k+3,d]\times[0,d]\times\cdots\times[0,d].$$
Figure \ref{even} is an illustration of the construction at this stage. 
    \begin{figure}[h]
        \centering
        \begin{tikzpicture}[scale=0.8]
            \draw[draw=gray] (7.5,3.5) to (6.5,3.5);
            \draw[draw=gray] (7.5,0.5) to (6.5,0.5);
            \draw[draw=gray] (-5,1) to (-4,2);
            \draw[draw=gray] (-5,-2) to (-4,-1);
            \node at (0.5,6.5) {$\Big\uparrow$};
            \node at (1,6.5) {$e_1$};
		\draw[draw=gray] (5,5) to (7.5,7.5);
            \draw[draw=gray] (5,5) to (-5,5);
            \draw[draw=gray] (5,5) to (5,2);
            \draw[draw=gray] (5,1) to (5,-1);
            \draw[draw=gray] (5,-2) to (5,-5);
            \draw[draw=gray] (7.5,7.5) to (-2.5,7.5) to (-5,5) to (-5,2);
            \draw[draw=gray] (-5,1) to (-5,-1);
            \draw[draw=gray] (-5,-2) to (-5,-5) to (5,-5) to (7.5,-2.5) to (7.5,0.5);
            \draw[draw=gray] (7.5,1.5) to (7.5,3.5);
            \draw[draw=gray] (7.5,4.5) to (7.5,7.5);
            \draw[draw=gray] (-5,2) to (5,2) to (7.5,4.5);
            \draw[draw=gray] (-5,1) to (5,1) to (7.5,3.5);
            \draw[draw=gray] (-5,-1) to (5,-1) to (7.5,1.5);
            \draw[draw=gray] (-5,-2) to (5,-2) to (7.5,0.5);
            \node at (0,4) {$(2k-1)\times d\times\cdots\times d$};
            \node at (0,3) {edge $2n$-coloring};
            \node at (0,-3) {$(2k-1)\times d\times\cdots\times d$};
            \node at (0,-4) {edge $2n$-coloring};
            \node at (0,1.5) {$c_1$};
            \node at (-2.5,1.5) {$c_1$};
            \node at (2.5,1.5) {$c_1$};
            \node at (0,-1.5) {$c_1$};
            \node at (-2.5,-1.5) {$c_1$};
            \node at (2.5,-1.5) {$c_1$};
            \node at (0,0) {$2\times d\times\cdots\times d$};

\node at (-4,0) {$R_1$};
\node at (-4,-3.5) {$P_1$};
\node at (-4,3.5) {$Q_1$};

        \end{tikzpicture}
        \caption{The first stage of the definition of an edge coloring.}
        \label{even}
    \end{figure}
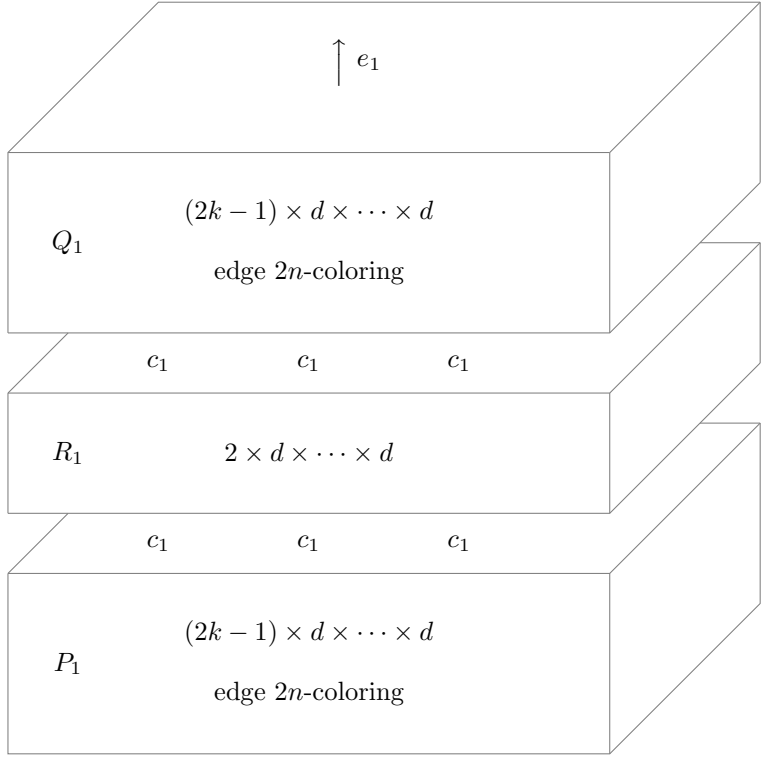

Now each of $P_1$ and $Q_1$ is a subrectangle of $R_0$ of size $(2k-1)\times d\times\cdots \times d$, and $R_1$ is a subrectangle of $R_0$ of size $2\times d\times\cdots\times d$. Since $2k-1$ is odd, we can apply Lemma~\ref{lem:bc2} to obtain a proper edge $2n$-coloring for $P_1$ and for $Q_1$ satisfying their boundary conditions. This is the end of stage 1. Note that the colors used so far are among $c_1,\dots, c_n, 1,\dots, n$, and the remaining uncolored edges are those of $R_1$. 

In stage 2, we repeat this construction by first decomposing $R_1$ into $P_2, Q_2, R_2$ according to the second coordinate of the vertices in $R_1$ and then properly edge $2n$-coloring the edges either in $P_2$ or $Q_2$ or adjacent to either of them. At the end of this stage, the remaining uncolored edges are those of $R_2$ and the colors used so far are still among $c_1,\dots, c_n, 1, \dots, n$.

Repeating this construction $n$ times according to each of the $i$-th coordinates, we obtain at the end of stage $n$ a subrectangle $R_n$ of size $2\times\cdots\times 2$. All those edges adjacent to $R_n$ have been colored to satisfy the boundary condition for $R_n$. Applying Lemma~\ref{lem:bc1}, we get a proper edge $(2n+1)$-coloring of $R_n$ which satisfies the boundary condition for $R_n$. This resulting edge coloring is proper and obviously satisfies the core condition for $R$.
\end{proof}

Next we prove a generalization of Lemma~\ref{middle} in which the core condition is replaced by a shifted core condition, as follows. Let $R$ be an $n$-dimensional rectangle of size $a_1\times\cdots\times a_n$, where $a_i$ is a postive even number for each $1\leq i\leq n$. Suppose
$$ R=[b_1,b_1+a_1]\times \cdots\times[b_n,b_n+a_n]$$
for $(b_1,\dots, b_n)\in\Z^n$. Let $K$ be the core of $R$. For $t=(t_1,\dots, t_n)\in \Z^n$ where 
$$ -\frac{a_i}{2}+1\leq t_i\leq \frac{a_i}{2}-1 $$
for each $1\leq i\leq n$, let
$$K+t=\{(x_1+t_1,\dots,x_n+t_n)\colon (x_1,\dots, x_n)\in K\}. $$
Then $K+t$ is a subrectangle of $R$ of size $2\times\cdots\times 2$. We call $K+t$ the {\em $t$-shifted core} of $R$. An edge coloring of $C(\mathbb{Z}^n)$ is said to satisfy the {\em $t$-shifted core condition} for $R$ if 
\begin{enumerate}
\item it is a proper edge $(2n+1)$-coloring of $R$ and its adjacent edges;
\item the color $n+1$ is only used in coloring some edges of the $t$-shifted core of $R$.
\end{enumerate}

\begin{lemma}\label{lem:shiftedcore}
Let $k$ be a non-negative integer and let $d=4k+2$. For any $n$-dimensional rectangle $R$ of size $d\times \cdots\times d$ and for any $t=(t_1,\dots, t_n)\in [-2k+2,2k-2]^n$ where $t_i$ is an even number for each $1\leq i\leq n$, there is a proper edge $(2n+1)$-coloring satisfying both the boundary condition and the $t$-shifted core condition for $R$. 
\end{lemma}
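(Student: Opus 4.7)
The plan is to mimic the proof of Lemma~\ref{middle}, modified so that the middle slab removed at each of the $n$ stages is shifted according to the corresponding component of $t$. Recall that in the original proof, stage $i$ decomposed $R_{i-1}$ along the $i$-th coordinate into three parts $P_i, Q_i, R_i$, with $R_i$ the centered width-$2$ slab; we then applied Lemma~\ref{lem:bc2} to $P_i$ and $Q_i$ and continued the recursion on $R_i$. The key feature was that $P_i$ and $Q_i$ each had size $2k-1$ in the $i$-th direction, which is odd, so Lemma~\ref{lem:bc2} was applicable.

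For the shifted version, at stage $i$ I would instead take $R_i$ to consist of those vertices whose $i$-th coordinate within $R_{i-1}$ lies in $[2k+t_i,\,2k+2+t_i]$. Then $P_i$ has size $2k-1+t_i$ in the $i$-th direction and $Q_i$ has size $2k-1-t_i$. Since $t_i$ is even with $|t_i|\leq 2k-2$, both of these sizes are odd positive integers, so Lemma~\ref{lem:bc2} still produces a proper edge $2n$-coloring of $P_i$ and of $Q_i$ satisfying the boundary condition, using only the colors $c_1,\dots,c_n,1,\dots,n$. After $n$ stages, the remaining uncolored subrectangle $R_n$ has size $2\times\cdots\times 2$ and is positioned at precisely $K+t$, the $t$-shifted core of $R$. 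Applying Lemma~\ref{lem:bc1} to $R_n$ with the full palette $c_1,\dots,c_n,1,\dots,n+1$ finishes the coloring, and the color $n+1$ appears only inside $K+t$, as the $t$-shifted core condition requires.

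The main thing to verify is that the boundary conditions chain consistently across the $n$ stages: after coloring $P_i$ and $Q_i$, the already-colored portion of $\partial R_i$ must agree with the boundary condition presupposed in stage $i+1$. This follows from the uniformity clause of the boundary condition, namely that all edges adjacent to the current subrectangle and parallel to $e_j$ carry the fixed color $c_j$; this property is preserved under every application of Lemma~\ref{lem:bc2}, exactly as in the proof of Lemma~\ref{middle}. The shift has no effect on this compatibility, since it only changes where the slab $R_i$ sits inside $R_{i-1}$, not which colors appear on its outer boundary. Hence the only real obstacle — that an odd-size layer is always available to invoke Lemma~\ref{lem:bc2} — is automatic from the parity hypothesis on $t$, and the argument goes through verbatim.
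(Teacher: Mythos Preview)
Your proposal is correct and follows essentially the same approach as the paper: modify the proof of Lemma~\ref{middle} so that at stage $i$ the slab $R_i$ is shifted by $t_i$, giving $P_i$ and $Q_i$ sizes $2k-1+t_i$ and $2k-1-t_i$ in the $i$-th direction, which are odd (and positive, since $|t_i|\le 2k-2$) so Lemma~\ref{lem:bc2} applies. Your additional remarks about the boundary conditions chaining consistently are accurate and simply make explicit what the paper leaves implicit.
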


\begin{proof} The proof is identical to that of Lemma~\ref{middle}, except that at each stage of the construction $P_i$ will have size $2\times \cdots\times 2\times(2k-1+t_i)\times d\times\cdots\times d$ and $Q_i$ will have size $2\times\cdots \times 2\times (2k-1-t_i)\times d\times\cdots\times d$. Since $2k-1+t_i$ and $2k-1-t_i$ are both odd, Lemma~\ref{lem:bc2} can be applied to complete the proof.
\end{proof}

In our proof of the main theorem below, we view an application of Lemma~\ref{lem:shiftedcore} as a two-step process. In the first step, we apply Lemma~\ref{middle} to obtain a proper edge $(2n+1)$-coloring of $R$ satisfying the core condition. Then, in the second step, we may shift the core to any other position within $R$ as long as each coordinate of the shift vector used is even. This is not quite what happens in the proof of Lemma~\ref{lem:shiftedcore}, but it is a useful point of view to describe the algorithm in the proof of our main theorem.

\subsection{Continuous proper edge colorings} In this final subsection we prove the main theorem of the paper. We will use the following lemmas from \cite{GJ15}.

\begin{lemma}\label{MarkerPoint}
    (Basic clopen marker lemma \cite[Lemma 2.1]{GJ15}) Let $\rho$ be the path distance on the Schreier graph $F(2^{\mathbb Z^n})$. Then for any positive integer $d$, there is a relatively clopen set $M_d\subseteq F(2^{\mathbb Z^n})$ such that
    \begin{enumerate}
        \item if $x,y\in M_d$ are distinct then $\rho(x,y)>d$;
        \item for any $x\in F(2^{\mathbb Z^n})$ there is $y\in M_d$ such that $\rho(x,y)\leq d$.
    \end{enumerate}
\end{lemma}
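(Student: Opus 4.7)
The plan is to first build $M_d$ as an open subset of $F(2^{\mathbb{Z}^n})$ satisfying conditions $(1)$ and $(2)$, and then observe that the combination of these two conditions automatically forces $M_d$ to be closed. The key point is that, while $M_d$ will be presented as a countable union of clopen pieces, its complement will turn out to be a \emph{finite} union of open sets, which is what makes the argument work in this non-compact setting where a countable union of clopen sets need not be clopen.

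First, I would exploit that the shift action of $\mathbb{Z}^n$ on $F(2^{\mathbb{Z}^n})$ is free and continuous, and that $F(2^{\mathbb{Z}^n})$ is a zero-dimensional Polish space with a countable clopen base. Let $B=\{g\in \mathbb{Z}^n : \rho(g,\bar{0})\leq d,\ g\neq \bar{0}\}$, which is finite. For each $x\in F(2^{\mathbb{Z}^n})$ and each $g\in B$, freeness gives $x\neq g\cdot x$; so Hausdorffness and continuity produce a clopen neighborhood of $x$ disjoint from its $g$-translate. Intersecting over the finitely many $g\in B$ yields a clopen neighborhood $U_x$ of $x$ with $U_x\cap(g\cdot U_x)=\emptyset$ for every $g\in B$. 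By Lindel\"of, I cover $F(2^{\mathbb{Z}^n})$ by countably many such sets $U_0,U_1,\dots$ and define recursively
\[
W_i \;=\; U_i\setminus\bigcup_{j<i}\bigcup_{g\in B\cup\{\bar{0}\}} g\cdot W_j,\qquad M_d\;=\;\bigcup_i W_i.
\]
Each $W_i$ is clopen as a finite set-theoretic difference of clopen sets, so $M_d$ is at least open. Condition $(1)$ is verified by the following case split: for $x\in W_i$, $y\in W_j$ with $y=g\cdot x$ and $g\in B$, if $i<j$ the subtraction at stage $j$ removes $y$ from $W_j$; if $i>j$ the subtraction at stage $i$ removes $x$; if $i=j$ the property $U_i\cap(g\cdot U_i)=\emptyset$ applies directly. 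Condition $(2)$ holds because if $x\in U_i$, either $x\in W_i\subseteq M_d$, or $x$ was removed at stage $i$, meaning some $g\cdot x\in W_j\subseteq M_d$ for $j<i$ and $g\in B\cup\{\bar{0}\}$, in which case $\rho(x,g\cdot x)\leq d$.

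To upgrade from open to clopen, I would use the set-theoretic identity
\[
F(2^{\mathbb{Z}^n})\setminus M_d \;=\; \bigcup_{g\in B} g\cdot M_d,
\]
which is forced by $(1)$ and $(2)$: the inclusion $\subseteq$ follows from $(2)$ after noting that the witnessing $g$ must be nonzero for $x\notin M_d$, and the reverse inclusion follows from $(1)$ together with freeness. Since each element of $\mathbb{Z}^n$ acts by a homeomorphism, each $g\cdot M_d$ is open, so the right-hand side is a \emph{finite} union of open sets and hence open. Therefore $M_d$ is closed as well, and so clopen. The main potential obstacle is the bookkeeping in the inductive verification of condition $(1)$ from the greedy definition, but once the three cases $i<j$, $i>j$, $i=j$ are handled, the passage from ``open'' to ``clopen'' is delivered entirely by the finiteness of $B$ in the displayed complement identity.
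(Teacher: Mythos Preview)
The paper does not give its own proof of this lemma; it simply quotes it from \cite[Lemma 2.1]{GJ15}. Your argument is correct and is essentially the standard greedy construction used for such marker lemmas: refine a countable clopen cover to pieces that are $B$-separated from themselves, then subtract off previously chosen pieces and their $B$-translates. Your verification of conditions (1) and (2) is sound, including the three-case split on $i<j$, $i>j$, $i=j$.

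One small comment on the clopenness step. Your identity $F(2^{\mathbb{Z}^n})\setminus M_d=\bigcup_{g\in B}g\cdot M_d$ is correct and gives a clean way to pass from open to clopen using only the finiteness of $B$. An alternative, which is what one often sees in the literature, is to observe directly that $M_d=\bigcap_{g\in B}(g\cdot M_d)^c$ is closed once (1) and (2) are known, or to start instead from a \emph{finite} clopen cover coming from compactness of $2^{\mathbb{Z}^n}$ (pull back a cover of the compact ambient space and intersect with the free part), so that $M_d$ is a finite union of clopen sets from the outset. Either route works; yours is perfectly fine and arguably more transparent about why the non-compactness of $F(2^{\mathbb{Z}^n})$ does not cause trouble.

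A minor notational quibble: you write $B=\{g\in\mathbb{Z}^n:\rho(g,\bar 0)\le d,\ g\ne\bar 0\}$, but $\rho$ was defined as the path distance on the Schreier graph, not on $\mathbb{Z}^n$. Of course by freeness $\rho(x,g\cdot x)$ equals the word length of $g$ in $C(\mathbb{Z}^n)$, so the intended meaning is clear; just say so explicitly.
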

The integer $d$ in the above lemma is usually called the {\em marker distance}, the set $M_d$ is called a {\em marker set}, and its elements are called {\em marker points}. The careful reader might note that the path distance $\rho$ we use in this paper is the $\ell_\infty$ distance on $\mathbb{Z}^n$, while in \cite{GJ15} the distance $\rho$ was the $\ell_1$ distance. Since the proof for Lemma~\ref{MarkerPoint} only depends on the finiteness of the balls in $\mathbb{Z}^n$ under these distances, the lemma continues to hold for the path distance we use here. However, note that the balls under the $\ell_\infty$ distance are $n$-dimensional rectangles, and hence are more convenient to use in our discussions below.

\begin{lemma}\label{MarkerRegion}
    (Marker regions lemma \cite[Theorem 3.1]{GJ15}) Let $d>0$ be an integer. Then there is a relatively clopen subequivalence relation $R^n_d$ on $F(2^{\mathbb{Z}^n})$ such that each of the $R^n_d$-equivalence classes is of the form $R\cdot x$, where $R$ is an $n$-dimensional rectangle with side lengths either $d$ or $d+1$. Here $R^n_d$ is relatively clopen means that $\left\{(x,g)\in F(2^{\mathbb Z^n})\times\mathbb Z^n\colon (x, g\cdot x)\in R^n_d\right\}$ is a clopen subset of $F(2^{\mathbb Z^n})\times\mathbb Z^n$.
\end{lemma}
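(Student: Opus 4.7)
The plan is to prove Lemma~\ref{MarkerRegion} by induction on $n$, using the Basic Clopen Marker Lemma (Lemma~\ref{MarkerPoint}) as the base tool.

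For the base case $n = 1$, I would apply Lemma~\ref{MarkerPoint} with marker distance $D \geq d^2$, so that between any two consecutive markers on a $\mathbb{Z}$-orbit the gap $\ell$ lies in $[D+1,\,2D+1]$. Since $D + 1$ exceeds $d^2 - d - 1$, the Frobenius number of $\{d, d+1\}$, every such $\ell$ can be written as $ad + b(d+1)$ with $a, b \geq 0$. I would fix once and for all a canonical decomposition for each $\ell$ in this range (for instance, the unique one with $0 \leq b < d$), and subdivide the interval between consecutive markers into sub-intervals of lengths $d$ and $d+1$ accordingly. Relative clopenness follows because each cut is determined by the finitely many orbit coordinates needed to locate the nearest pair of markers.

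For the inductive step $n \geq 2$, I would apply Lemma~\ref{MarkerPoint} with a very large marker distance $D' \gg d$, obtaining a marker set $M \subseteq F(2^{\mathbb{Z}^n})$. The strategy is to build the rectangular partition by choosing, for each coordinate direction $e_i$, a relatively clopen collection of cut-hyperplanes perpendicular to $e_i$, with consecutive hyperplanes at distance $d$ or $d+1$. Taking the common refinement of these $n$ families then partitions each orbit into $n$-dimensional rectangles with the required side lengths. For the intersection to yield genuine rectangles rather than warped shapes, the cut positions in direction $e_i$ must be the same on all lines parallel to $e_i$ within a common ``block''. I would enforce this by using the markers as anchors: for each $m \in M$, apply the base-case construction on the $e_i$-line through $m$ to determine candidate cut positions, then propagate these to the surrounding block of size roughly $D'$.

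The main obstacle is consistency: ensuring that the cut positions determined by two distinct anchor markers agree on the lines they both influence, so that each orbit receives a single well-defined rectangular partition. I would address this by making the one-dimensional cut rule purely local and canonical, so that the chosen decomposition of a gap $\ell$ depends only on $\ell \bmod d(d+1)$ (together with the signed offset of one endpoint modulo $d(d+1)$), rather than on the choice of anchor; neighboring markers then automatically yield compatible cuts on their shared lines. Choosing $D'$ sufficiently large relative to $d$ guarantees that the zones of influence of different markers fit together without conflict. Once consistency is established, every orbit acquires a partition into rectangles with side lengths in $\{d, d+1\}$, and the resulting subequivalence relation $R^n_d$ is relatively clopen because each partition decision is encoded by finitely many coordinates of $x$.
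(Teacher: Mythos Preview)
This lemma is not proved in the paper; it is cited from \cite[Theorem~3.1]{GJ15}. Your base case $n=1$ is correct and standard.

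The inductive step has a genuine gap at precisely the point you flag. Your proposed fix---making the one-dimensional cut rule depend only on $\ell \bmod d(d+1)$ and on ``the signed offset of one endpoint modulo $d(d+1)$''---cannot succeed, because orbits of $F(2^{\mathbb Z^n})$ carry no canonical origin: any ``offset'' is defined only relative to a chosen anchor. Concretely, in dimension $2$, take two markers $m_1,m_2$ lying on distinct $e_1$-lines. The marker configurations on these two lines are unrelated, so the $e_1$-cut patterns your base-case rule places on them bear no a priori relation to one another. When both patterns are propagated to a parallel $e_1$-line lying in the overlap of the two blocks, they disagree, and the cells along that seam fail to be rectangles. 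No rule depending only on gap lengths and local offsets repairs this, since the obstruction is the relative $e_1$-position of the anchors, a free integer you do not control. (Put more abstractly: a nontrivial clopen subset of $F(2^{\mathbb Z^n})$ cannot be invariant under $\langle e_j:j\neq i\rangle$, so there is no clopen family of ``hyperplanes'' to serve as global cuts in direction $e_i$.)

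The argument in \cite{GJ15} proceeds differently. It does not build direction-by-direction cut-hyperplanes; instead it first constructs, by a rather intricate inductive adjustment of regions grown around marker points, a \emph{coarse} clopen partition of each orbit into large genuine rectangles with side lengths on the order of the marker distance. Only then is each large rectangle subdivided into small ones with sides $d$ or $d+1$ via the Frobenius argument applied in each coordinate, at which point the problem is finitary and there is no cross-region consistency to manage. The substantive content of the lemma lies entirely in obtaining the coarse rectangular partition, and that is the step your sketch does not supply.
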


The $R^n_d$-equivalence classes in the above lemma are called {\em marker regions}.

All the terminology we defined in the preceding subsection about an $n$-dimensional rectangle $R$ in $\mathbb{Z}^n$ can be similarly defined for a marker region of the form $R\cdot x$ for $x\in F(2^{\mathbb{Z}^n})$. Thus we may refer to a marker region as an $n$-dimensional rectangle in the Schreier graph $F(2^{\Z^n})$. In particular, for a continuous edge coloring of $F(2^{\mathbb{Z}^n})$, we may consider whether it satisfies the boundary condition and the (shifted) core condition for a marker region, and when we write the marker region as $R\cdot x$, the boundary condition and the (shifted) core condition are independent from the choice of $R$ and $x$. Note that the (shifted) core condition is only valid when the side lengths of the marker region are even.

\begin{proposition}\label{standard}
    Let $k$ be a non-negative integer and let $d=4k+2$. Then there is a relatively clopen subequivalence relation $R^n_d$ on $F(2^{\mathbb{Z}^n})$ and a continuous proper edge $(2n+1)$-coloring of $F(2^{\mathbb{Z}^n})$ such that 
\begin{enumerate}
\item[(i)] each of the $R^n_d$-equivalence classes is of the form $R\cdot x$, where $R$ is an $n$-dimensional rectangle with side lengths either $d$ or $d+1$;
\item[(ii)] the edge coloring satisfies both the boundary condition and the core condition for each of the $R^n_d$-equivalence classes.
\end{enumerate}
\end{proposition}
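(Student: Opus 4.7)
The plan is to combine the marker region lemma with the constructive rectangle-coloring lemmas proved earlier in this subsection. I would first apply Lemma~\ref{MarkerRegion} with marker distance $d = 4k+2$ to obtain a relatively clopen subequivalence relation $R^n_d$ on $F(2^{\mathbb{Z}^n})$ whose equivalence classes are $n$-dimensional rectangles with side lengths either $d$ or $d+1$. There are only $2^n$ possible shapes for such a rectangle (each coordinate direction independently contributes either $d$ or $d+1$), and since $R^n_d$ is relatively clopen, the shape of the region containing a given point $x$, as well as the coordinates of $x$ within that region, depend only on a clopen neighborhood of $x$.

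For each possible shape, I would fix once and for all a canonical edge coloring of a model rectangle of that shape. If all side lengths equal $d=4k+2$, I would apply Lemma~\ref{middle} to obtain a proper edge $(2n+1)$-coloring satisfying both the boundary condition and the core condition. If at least one side length equals $d+1$, which is odd, I would apply Lemma~\ref{lem:bc2} to obtain a proper edge $2n$-coloring satisfying the boundary condition; such a coloring does not use the color $n+1$ at all, so the core condition holds vacuously for that region. Transferring each model coloring to every marker region of the corresponding shape provides colors for all edges both endpoints of which lie in the same equivalence class. For an edge whose endpoints lie in two distinct marker regions, I would color it $c_i$ whenever it is parallel to $e_i$. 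The boundary condition in both constructions forces precisely this assignment on the adjacent edges of each region, so the two prescriptions agree on every edge and the definition is unambiguous.

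What remains is verification: properness inside each marker region is delivered by whichever of Lemma~\ref{middle} or Lemma~\ref{lem:bc2} was invoked, and at every vertex the edges leaving its region are colored $c_1,\dots,c_n$ according to direction, consistently with the boundary condition imposed in that region. Continuity reduces to the observation that the color of an edge is a function of the shape of the region(s) it touches and of the coordinates of its endpoints within those regions, all of which are clopen data by the relative clopenness of $R^n_d$. The boundary and core conditions then hold for every equivalence class by construction. The only real delicacy is bookkeeping: verifying that the canonical model colorings can be transported equivariantly within each region in a shape-dependent way without creating conflicts along the shared boundary edges, but this is immediate once one notes that the boundary condition prescribes the same color $c_i$ on both sides of every inter-region edge. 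The genuine combinatorial work has already been carried out in Lemmas~\ref{lem:bc1}, \ref{lem:bc2}, and \ref{middle}, and the present proposition is essentially their assembly via Lemma~\ref{MarkerRegion}.
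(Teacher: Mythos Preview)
Your proposal is correct and follows essentially the same approach as the paper: apply Lemma~\ref{MarkerRegion}, use Lemma~\ref{middle} on the all-$d$ regions and Lemma~\ref{lem:bc2} on regions with at least one side of length $d+1$, and glue along boundaries via the common color $c_i$. Your explicit observation that the core condition holds vacuously when Lemma~\ref{lem:bc2} is invoked (since the color $n+1$ is never used) is a helpful clarification that the paper leaves implicit.
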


\begin{proof} We apply Lemma~\ref{MarkerRegion} to obtain a relatively clopen subequivalence relation $R^n_d$ on $F(2^{\mathbb{Z}^n})$ so that each of the $R^n_d$-equivalence classes is of the form $R\cdot x$, where $R$ is an $n$-dimensional rectangle with side lengths either $d$ or $d+1$. Consider a fixed such $R$ and assume it is of size $a_1\times\cdots a_n$, where $a_i\in\{d, d+1\}$ for each $1\leq i\leq n$. If any $a_i$ is $d+1$, we obtain and fix a proper edge $2n$-coloring $f_R$ satisfying the boundary condition for $R$ by Lemma~\ref{lem:bc2}. Otherwise, $R$ is of size $d\times\cdots\times d$, and we obtain and fix a proper edge $(2n+1)$-coloring $f_R$ satisfying both the boundary condition and the core condition for $R$ by Lemma~\ref{middle}. Note that $f_R$ only depends on the sizes of $R$. 

Now we define a proper edge $(2n+1)$-coloring $c$ of $F(2^{\mathbb{Z}^n})$ as follows. For a standard generator $s=\pm e_i$ and $x\in F(2^{\mathbb{Z}^n})$, let 
$$ c(x,s\cdot x)=\left\{\begin{array}{ll}
f_R(\overline{0},s), & \mbox{ if $(x, s\cdot x)\in R^n_d$, $R$ is an $n$-dimensional rectangle,}\\
& \mbox{ and $R\cdot x$ is 
the $R^n_d$-class containing $x$;}\\
c_i, & \mbox{ if $(x,s\cdot x)\not\in R^n_d$.}
\end{array}\right.
$$
Then $c$ is proper because each $f_R$ satisfies the boundary condition, and $c$ is continuous because for an $n$-dimensional rectangle $R$, $R\cdot x$ is the $R^n_d$-class containing $x$ if and only if $\overline{0}\in R$.
\end{proof}

In particular we obtain a continuous proper edge $(2n+1)$-coloring of $F(2^{\Z^n})$. This generalizes  \cite[Theorem 3.1.4]{GJKS23} and answers  \cite[Question 10.3]{GS} for $n\geq 3$, which was also stated implicitly in \cite{GJKS23} (see the remark after  \cite[Question 3.1.5]{GJKS23}).

\begin{corollary}\label{cor:main} For any $n\geq 1$, $\chi'_c(F(2^{\Z^n}))=2n+1$.
\end{corollary}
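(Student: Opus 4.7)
The plan is to prove Corollary~\ref{cor:main} by combining the two bounds already established earlier in the paper, which match exactly.

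First, for the upper bound, I would invoke Proposition~\ref{standard} directly. Choosing any non-negative integer $k$ (for instance $k=0$, giving $d=2$), Proposition~\ref{standard} yields a continuous proper edge $(2n+1)$-coloring of $F(2^{\mathbb{Z}^n})$. This immediately gives $\chi'_c(F(2^{\mathbb{Z}^n})) \leq 2n+1$. Note that we only need the existence of such a coloring here, so the full strength of Proposition~\ref{standard} (the boundary and core conditions for each marker region) is not needed for the corollary itself, although those conditions are what made the construction possible.

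For the lower bound, I would apply Corollary~\ref{cor:lb} with $S$ taken to be the standard generating set $\{\pm e_i : 1 \leq i \leq n\}$ of $\mathbb{Z}^n$, so that $|S|=2n$. Since $\chi'(F(2^{\mathbb{Z}^n})) = \chi'(C(\mathbb{Z}^n,S)) = 2n$ (the Cayley graph is $2n$-regular and $2n$-edge-colorable by e.g.\ the standard direction coloring), Corollary~\ref{cor:lb} gives $\chi'_c(F(2^{\mathbb{Z}^n})) > 2n$, and so $\chi'_c(F(2^{\mathbb{Z}^n})) \geq 2n+1$.

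Combining the two inequalities yields $\chi'_c(F(2^{\mathbb{Z}^n})) = 2n+1$, as desired. There is no real obstacle here since both bounds have already been proved; the corollary is just the conjunction. One small cosmetic point I would address is the degenerate case $n=1$: here $F(2^{\mathbb{Z}})$ is a disjoint union of two-way infinite paths, the edge chromatic number is $2$, and the continuous edge chromatic number is $3$ (a standard fact, and also a consequence of the same argument via Theorem~\ref{thm:ncpm} applied to odd-length cycles $T_{q}$). Since Proposition~\ref{standard} and Corollary~\ref{cor:lb} are both stated for all $n\geq 1$, the same two-line argument works uniformly, and no separate treatment of $n=1$ is required.
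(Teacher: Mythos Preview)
Your proposal is correct and matches the paper's own treatment: the corollary is stated immediately after Proposition~\ref{standard} as the direct combination of the upper bound from that proposition with the lower bound from Corollary~\ref{cor:lb}, and the paper does not spell out any further argument.
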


Next, we generalize Corollary~\ref{cor:main} to the case of arbitrary generating sets. Again we assume tacitly that our generating sets are symmetric and do not contain the identity. Let $\Lambda$ be a subgroup of $\mathbb{Z}^n$ and let $S$ be a generating set of $\Lambda$, i.e., $\Lambda=\langle S\rangle$. We say that $S$ is {\em linearly semi-independent} if for any $s\in S$, $\langle s\rangle\cap \langle S\setminus\{s,-s\}\rangle=\{\bar{0}\}$. If $S=2m$ where $m\leq n$ and $S$ is linearly semi-independent, the Cayley graph $C(\Lambda, S)$ is isomorphic to the Cayley graph $C(\Z^m)$ with the standard generating set. We have the following generalizations of Lemmas~\ref{MarkerPoint} and \ref{MarkerRegion}.

\begin{lemma}\label{GenMarkerPoint}
   Let $\Lambda$ be a subgroup of $\mathbb Z^n$ and let $S$ be a generating set of $\Lambda$. Assume $S$ is linearly semi-independent. Let $\Lambda\acts F(2^{\mathbb Z^n})$ be the restriction of the Bernoulli shift action $\mathbb{Z}^n\curvearrowright F(2^{\mathbb{Z}^n})$ on $\Lambda\times F(2^{\mathbb Z^n})$. Let $\rho_S$ be the path distance on the Schreier graph $G(\Lambda,S,F(2^{\mathbb Z^n}))$. Then for any positive integer $d$, there is a relatively clopen set $M_d\subseteq F(2^{\mathbb Z^n})$ such that
    \begin{enumerate}
        \item if $x,y\in M_d$ are distinct then $\rho_S(x,y)>d$;
        \item for any $x\in F(2^{\mathbb Z^n})$ there is $y\in M_d$ such that $\rho_S(x,y)\leq d$.
    \end{enumerate}
\end{lemma}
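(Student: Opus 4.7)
The plan is to adapt the proof of Lemma~\ref{MarkerPoint} from \cite{GJ15} to this more general setting. First, I would observe that the $\Lambda$-action on $F(2^{\mathbb{Z}^n})$ is free, since the ambient $\mathbb{Z}^n$-action is free on $F(2^{\mathbb{Z}^n})$ and $\Lambda \leq \mathbb{Z}^n$. Writing $S = \{\pm s_1, \dots, \pm s_m\}$ with $m = |S|/2$, the hypothesis that $S$ is linearly independent says exactly that $\Lambda$ is free abelian of rank $m$ with basis $\{s_1, \dots, s_m\}$, so the assignment $(a_1, \dots, a_m) \mapsto \sum_i a_i s_i$ is a group isomorphism $\mathbb{Z}^m \to \Lambda$ carrying the standard generating set of $\mathbb{Z}^m$ onto $S$. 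This yields a graph isomorphism $C(\mathbb{Z}^m) \cong C(\Lambda, S)$, and, using freeness, each connected component of the Schreier graph $G(\Lambda, S, F(2^{\mathbb{Z}^n}))$ is isomorphic to $C(\mathbb{Z}^m)$.

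Next, I would run the marker construction of Lemma~\ref{MarkerPoint} essentially verbatim, but with the standard ball in $\mathbb{Z}^n$ replaced by the $\rho_S$-ball at $\bar 0$ in $\Lambda$. Concretely, this $\rho_S$-ball of radius $r$ equals the finite set $\Lambda_r := \{\sum_i a_i s_i \colon |a_1| + \cdots + |a_m| \leq r\}$, viewed as a subset of $\mathbb{Z}^n$. The only ingredients the proof of Lemma~\ref{MarkerPoint} actually uses are (i) the Cayley-graph structure $C(\mathbb{Z}^m)$ on each orbit, and (ii) the continuity of the pattern-based rule that decides membership in $M_d$. Ingredient (i) holds via the graph isomorphism above. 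Ingredient (ii) also holds, because $\Lambda_r$ is a fixed finite subset of $\mathbb{Z}^n$, so the map $x \mapsto x|_{\Lambda_r}$ is determined by finitely many coordinates of $x \in 2^{\mathbb{Z}^n}$ and is therefore continuous (in fact a clopen coarsening of the product topology). Thus the same pattern-based rule defines, on each $\Lambda$-orbit, a $\rho_S$-separated and $\rho_S$-covering set, and the combined set $M_d \subseteq F(2^{\mathbb{Z}^n})$ is relatively clopen.

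The main obstacle is bureaucratic rather than conceptual: the proof of Lemma~\ref{MarkerPoint} in \cite{GJ15} is a somewhat delicate iterative pattern argument, and one must confirm that every step uses only the ingredients (i) and (ii) above, rather than some further feature of the ambient $\mathbb{Z}^n$-action on $F(2^{\mathbb{Z}^n})$. Once this inspection is carried out, the construction transfers with no new ideas to produce a relatively clopen $M_d \subseteq F(2^{\mathbb{Z}^n})$ which is $\rho_S$-separated by more than $d$ and which $\rho_S$-covers $F(2^{\mathbb{Z}^n})$ at distance at most $d$, as required.
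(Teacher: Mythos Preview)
Your proposal is correct and matches the paper's own treatment: the paper simply states that the proof of this lemma is identical to that of Lemma~\ref{MarkerPoint}, and your write-up just spells out why the transfer goes through (freeness of the restricted action, the isomorphism $C(\mathbb{Z}^m)\cong C(\Lambda,S)$, and the fact that the pattern rule depends only on finitely many coordinates).
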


\begin{lemma}\label{GenMarkerRegion}
Let $\Lambda$ be a subgroup of $\mathbb Z^n$ and let $S$ be a generating set of $\Lambda$. Assume $S$ is linearly semi-independent. Let $\Lambda\acts F(2^{\mathbb Z^n})$ be the restriction of the Bernoulli shift action $\mathbb{Z}^n\curvearrowright F(2^{\mathbb{Z}^n})$ on $\Lambda\times F(2^{\mathbb Z^n})$.  Let $d>0$ be an integer. Assume $|S|=2m$ where $m\leq n$. Then there is a relatively clopen subequivalence relation $R^S_d$ on $F(2^{\mathbb{Z}^n})$ such that each of the $R^S_d$-equivalence classes is an $m$-dimensional rectangle in the Schreier graph $G(\Lambda, S, F(2^{\Z^n}))$ with side lengths either $d$ or $d+1$. 
\end{lemma}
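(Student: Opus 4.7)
The plan is to mirror the proof of Lemma~\ref{MarkerRegion} (Theorem~3.1 of~\cite{GJ15}) in the present setting, substituting Lemma~\ref{GenMarkerPoint} for its clopen marker-point ingredient. As a first step I would exploit the linear independence of $S$ to write $S=\{\pm s_1,\dots,\pm s_m\}$ where $s_1,\dots,s_m$ is a $\mathbb{Z}$-basis of $\Lambda$; this yields an isomorphism of marked groups $(\Lambda,S)\cong(\mathbb{Z}^m,\{\pm e_i\colon 1\leq i\leq m\})$ and a graph isomorphism $C(\Lambda,S)\cong C(\mathbb{Z}^m)$ with its standard generating set. Because the ambient action $\mathbb{Z}^n\acts F(2^{\mathbb{Z}^n})$ is free, so is its restriction to $\Lambda$, and consequently each connected component of $G(\Lambda,S,F(2^{\mathbb{Z}^n}))$ is an isomorphic copy of $C(\mathbb{Z}^m)$. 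The phrase ``$m$-dimensional rectangle in the Schreier graph'' therefore refers unambiguously to sets of the form $R\cdot x$, where $R\subseteq\Lambda$ is an $m$-dimensional rectangle under the identification $\Lambda\cong\mathbb{Z}^m$.

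Having set up this reduction, I would invoke Lemma~\ref{GenMarkerPoint} with a marker distance $D=D(d)$ (chosen as dictated by the GJ15 algorithm, e.g.\ proportional to $d$) to obtain a relatively clopen marker set $M_D\subseteq F(2^{\mathbb{Z}^n})$ with respect to $\rho_S$. I would then run the marker-region construction from the proof of Lemma~\ref{MarkerRegion} verbatim, but inside each $\Lambda$-orbit of $F(2^{\mathbb{Z}^n})$. Concretely, for each $x\in F(2^{\mathbb{Z}^n})$, the algorithm locates the marker points of $M_D$ in a bounded $\rho_S$-neighborhood of $x$, compares their positions along each of the $m$ basis directions of $\Lambda$, and deterministically assigns $x$ to a canonical $m$-dimensional rectangular region whose side lengths are either $d$ or $d+1$. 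This produces a subequivalence relation $R^S_d$ of the $\Lambda$-orbit equivalence relation on $F(2^{\mathbb{Z}^n})$ whose classes are exactly these rectangles.

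The main obstacle I anticipate is verifying that $R^S_d$ is relatively clopen in the ambient sense, i.e.\ that the set $\{(x,g)\in F(2^{\mathbb{Z}^n})\times\Lambda\colon(x,g\cdot x)\in R^S_d\}$ is clopen in $F(2^{\mathbb{Z}^n})\times\Lambda$. This amounts to two locality facts: first, the output of the algorithm at $x$ depends only on the intersection of $M_D$ with a bounded $\rho_S$-ball around $x$, which is a relatively clopen function of $x$ because $M_D$ is relatively clopen and the $\Lambda$-action is continuous; and second, the radius needed for this bounded ball is uniform in $x$, which follows from the marker-spacing and covering properties (1) and (2) of Lemma~\ref{GenMarkerPoint}. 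Once these locality and uniformity facts are confirmed, the conclusion follows from the GJ15 algorithm with no further modifications, since that algorithm is purely combinatorial in the $(\mathbb{Z}^m,\{\pm e_i\})$-structure of each orbit and never appeals to features of the original $\mathbb{Z}^n$-action beyond those already supplied by Lemma~\ref{GenMarkerPoint}.
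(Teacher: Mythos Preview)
Your proposal is correct and matches the paper's approach exactly: the paper simply states that the proof is identical to that of Lemma~\ref{MarkerRegion}, noting that all discussions for the standard generating set apply to linearly independent $S$ via the marked-group isomorphism $(\Lambda,S)\cong(\mathbb{Z}^m,\{\pm e_i\})$ you describe. Your elaboration of the locality and clopenness verification is more detailed than what the paper provides, but follows the same reduction.
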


The proofs of these lemmas are identical to those of Lemmas~\ref{MarkerPoint} and \ref{MarkerRegion}. In fact, all of our discussions about the standard generating set so far in this paper apply to these more general linearly semi-independent generating sets. In particular we have the following generalization of Proposition~\ref{standard} with the same proof.

\begin{proposition}\label{nonstandard}
Let $\Lambda$ be a subgroup of $\mathbb Z^n$ and let $S$ be a generating set of $\Lambda$. Assume $S$ is linearly semi-independent. Let $\Lambda\acts F(2^{\mathbb Z^n})$ be the restriction of the Bernoulli shift action $\mathbb{Z}^n\curvearrowright F(2^{\mathbb{Z}^n})$ on $\Lambda\times F(2^{\mathbb Z^n})$. Let $k$ be a non-negative integer and let $d=4k+2$. Assume $|S|=2m$ where $m\leq n$. Then there is a relatively clopen subequivalence relation $R^S_d$ on $F(2^{\mathbb{Z}^n})$ and a continuous proper edge $(2m+1)$-coloring of $G(\Lambda, S, F(2^{\mathbb{Z}^n}))$ such that 
\begin{enumerate}
\item[(i)] each of the $R^S_d$-equivalence classes is an $m$-dimensional rectangle with side lengths either $d$ or $d+1$;
\item[(ii)] the edge coloring satisfies both the boundary condition and the core condition for each of the $R^S_d$-equivalence classes.
\end{enumerate}
\end{proposition}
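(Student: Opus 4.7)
The plan is to mimic the proof of Proposition~\ref{standard} almost verbatim, exploiting the linear independence of $S$ to transfer the rectangle-coloring lemmas to the Schreier graph $G(\Lambda, S, F(2^{\mathbb{Z}^n}))$. Since $|S|=2m$ and $S$ is linearly independent, the Cayley graph $C(\Lambda, S)$ is graph-isomorphic to the standard Cayley graph $C(\Z^m)$, so an $m$-dimensional rectangle inside $C(\Lambda, S)$ is combinatorially indistinguishable from one inside $C(\Z^m)$. Consequently, all the rectangle coloring results (Lemmas~\ref{lem:bc1}, \ref{lem:bc2}, and~\ref{middle}) apply verbatim to rectangles inside $G(\Lambda, S, F(2^{\mathbb{Z}^n}))$, including the notions of boundary condition and core condition, once we reinterpret $e_i$ as the $i$-th element of a fixed ordering of $S$.

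First I will invoke Lemma~\ref{GenMarkerRegion} with the prescribed $d=4k+2$ to obtain a relatively clopen subequivalence relation $R^S_d$ whose equivalence classes are $m$-dimensional rectangles in $G(\Lambda,S,F(2^{\mathbb{Z}^n}))$ with side lengths in $\{d,d+1\}$. This gives clause (i) of the conclusion immediately. Because the relation is relatively clopen in the sense of Lemma~\ref{GenMarkerRegion}, for every marker region $R\cdot x$ both its size (an element of $\{d,d+1\}^m$) and its internal structure are locally determined in a clopen fashion.

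Next, for each possible size $a_1\times\cdots\times a_m$ with $a_i\in\{d,d+1\}$, I fix once and for all a proper edge coloring of a rectangle of that size together with its adjacent edges. If some $a_i=d+1$, then since $d+1$ is odd, Lemma~\ref{lem:bc2} yields a proper edge $2m$-coloring satisfying the boundary condition. If instead all $a_i=d$, then since $d=4k+2$, Lemma~\ref{middle} yields a proper edge $(2m+1)$-coloring satisfying both the boundary condition and the core condition. I then transport these fixed template colorings to every marker region $R\cdot x$ of the corresponding size; because a marker region has a canonical identification with a standard rectangle up to the choice of origin, and the boundary condition forces the colors of edges parallel to each $e_i\in S$ on the exterior faces to be the same color $c_i$, the chosen templates glue consistently across adjacent marker regions. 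This yields clause (ii).

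What remains is to verify properness and continuity. Properness on the interior of each marker region is built into the template. At the shared boundary between two adjacent marker regions, any edge parallel to the $i$-th generator is colored $c_i$ by the boundary condition from both sides, so no conflict arises; thus the global edge coloring is a proper edge $(2m+1)$-coloring of $G(\Lambda, S, F(2^{\mathbb{Z}^n}))$. Continuity follows because the color of each edge depends only on (a) the size of the marker region containing it and (b) its coordinate position inside that region, both of which are locally clopen data by the relative clopenness of $R^S_d$. I do not anticipate a genuine obstacle here: the only thing to check carefully is that ``boundary condition'' and ``core condition'' are well-defined for a marker region independently of the presentation $R\cdot x$, which follows from equivariance of the action. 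The linear-independence hypothesis is used exactly once, to justify that we may regard marker regions as standard $m$-dimensional rectangles and thereby apply the coloring lemmas of the preceding subsection without modification.
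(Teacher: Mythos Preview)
Your proposal is correct and follows essentially the same approach as the paper: the paper simply remarks that Proposition~\ref{nonstandard} has ``the same proof'' as Proposition~\ref{standard}, with Lemma~\ref{GenMarkerRegion} replacing Lemma~\ref{MarkerRegion} and the linear independence of $S$ ensuring that the rectangle-coloring Lemmas~\ref{lem:bc1}, \ref{lem:bc2}, and~\ref{middle} transfer without change. Your write-up is in fact a bit more explicit than the paper's about the gluing and continuity verification, but the argument is the same.
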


We are now ready for the main theorem of this paper.

\begin{theorem}
    For any $n\geq 1$ and any generating set $S$ of $\mathbb {Z}^n$, let $G=F(S,2^{\mathbb{Z}^n})$. Then $\chi'_c(G)=\chi'(G)+1=|S|+1$.
\end{theorem}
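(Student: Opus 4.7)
By Corollary~\ref{cor:lb}, $\chi'_c(G)\geq\chi'(G)+1=|S|+1$, so it remains to construct a continuous proper edge $(|S|+1)$-coloring of $G$. My plan is to generalize Proposition~\ref{nonstandard}, which handles linearly independent generating sets, to arbitrary generating sets $S$ of $\mathbb{Z}^n$.

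Let $|S|=2m$. First I would select a linearly independent subset $S'\subseteq S$ of size $2n$, which always exists since $S$ generates $\mathbb{Z}^n$ and hence $\mathbb{Q}$-spans $\mathbb{Q}^n$. Setting $\Lambda=\langle S'\rangle$, a finite-index sublattice of $\mathbb{Z}^n$, Lemma~\ref{GenMarkerRegion} supplies, for sufficiently large $d=4k+2$, a relatively clopen subequivalence relation $R^{S'}_d$ on $F(2^{\mathbb{Z}^n})$ whose equivalence classes are $n$-dimensional rectangles of side $d$ or $d+1$ in the $S'$-Schreier graph. These will serve as the marker regions.

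The plan then parallels Proposition~\ref{nonstandard}. Inside each marker region $R$ the edge coloring is given by a template depending only on the shape of $R$, and along the boundary the colors of all $S$-edges crossing out of $R$ are fixed by a rule depending only on the direction $s\in S$. To build this template I would establish generalized analogs of Lemmas~\ref{lem:bc1}, \ref{lem:bc2}, and \ref{middle}: for each relevant rectangle $R$ there is a proper edge $(2m+1)$-coloring of the $S$-edges of $R$ together with their adjacent edges, satisfying an adapted boundary condition that assigns fixed colors to all boundary-crossing $S$-edges as a function of their direction (possibly using two alternating colors per direction to handle boundary vertices that have both an incoming and an outgoing $\pm s$-edge for ``long'' $s$), and, when every side of $R$ has length $d$, a core condition restricting the extra $(2m+1)$-st color to a small central region. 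Taking $d\gg\max_{s\in S}\|s\|_\infty$ would make the boundary layer thin relative to the interior and keep the construction tractable.

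The main obstacle will be the combinatorial content of the generalized Lemma~\ref{lem:bc1}: constructing a proper edge $(2m+1)$-coloring of the $S$-edges of a rectangle that respects the boundary condition for every direction $s\in S$ simultaneously. In contrast to the standard generating set, $C(\mathbb{Z}^n,S)$ is in general non-bipartite, so the layered inductive argument of the standard case does not transfer directly; however, Vizing's theorem gives $\chi'\leq|S|+1$ in the finite case, and the extra $(2m+1)$-st color provides exactly the slack needed to meet the prescribed boundary colors. A further subtlety when $\Lambda\subsetneq\mathbb{Z}^n$ is that elements of $S\setminus S'$ lying outside $\Lambda$ produce edges connecting distinct $\Lambda$-cosets and therefore always cross between marker regions; fixing the colors of these by a clopen rule that distinguishes the two endpoints' cosets (via the marker structure itself) resolves this. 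Once these technical points are handled, the assembly of the continuous coloring follows exactly as in Proposition~\ref{nonstandard}, completing the proof.
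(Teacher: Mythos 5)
Your use of Corollary~\ref{cor:lb} for the lower bound is correct, and the idea of tiling by rectangular marker regions and coloring by templates is the right starting point, but the single-marker-structure plan has two genuine gaps, the second of which is the actual heart of the problem. First, the generalized analog of Lemmas~\ref{lem:bc1}--\ref{middle} that you invoke is only asserted: Vizing's theorem gives \emph{some} proper edge $(|S|+1)$-coloring of the finite graph induced on a rectangle, but says nothing about realizing a \emph{prescribed} coloring of the boundary-crossing edges, which is precisely what makes the templates glue into a continuous coloring. The prescription itself is also problematic: for a non-standard $s\in S$, a vertex near a corner of $R$ can have \emph{both} of its $\pm s$-edges leave $R$, so ``one fixed color per direction'' is immediately improper, and your patch of two colors per direction runs into the continuity obstruction described next.

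Second, and decisively: for $s\in S$ with $s\notin\Lambda=\langle S'\rangle$, \emph{every} $\pm s$-edge joins two distinct $\Lambda$-cosets, hence two marker regions whose relative alignment varies over the space, so no template on a single region can govern these edges. The fallback of ``two alternating colors per direction'' cannot be carried out continuously: the $\pm s$-edges form the Schreier graph of $\langle s\rangle\cong\mathbb{Z}$, a union of bi-infinite lines, and a continuous proper edge $2$-coloring of these lines would give a continuous perfect matching of them, which is ruled out by the same odd-torus argument as in Theorem~\ref{thm:ncpm} (the $s$-components of $T_{q_1^k,\dots,q_n^k}$ with all $q_i$ odd have odd cardinality). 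Hence the $(|S|+1)$-st color must be injected into each such line infinitely often, and the entire difficulty is to control \emph{where}, so that its occurrences for different directions, and on the $S'$-rectangles, are never adjacent; your proposal contains no mechanism for this. The paper's proof supplies exactly that mechanism: it decomposes $S=\bigcup_{i=0}^m S_i$ into maximal linearly independent subsets, applies Proposition~\ref{nonstandard} to each $S_i$ separately with its own marker regions, its own disjoint palette of $|S_i|$ colors, and one globally shared color $0$ confined to small cores, and then uses a pigeonhole argument over the translates $a\beta s\cdot K$, $0\leq a\leq\alpha$, together with Lemma~\ref{lem:shiftedcore}, to shift each core so that the shared color is never used on adjacent edges coming from different levels; the color count is $1+\sum_{i}|S_i|=|S|+1$. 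Without this decomposition-and-shifted-core idea, or a substitute for it, your construction does not go through.
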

\begin{proof} It is easy to see that $\chi'(G)=|S|$. The lower bound has been established by Corollary~\ref{cor:lb}. We only need to construct a continuous proper edge $(|S|+1)$-coloring of $F(S, 2^{\mathbb{Z}^n})$. 

Inductively define $S_i$ so that
\begin{enumerate}
\item[(i)] $S_0$ is a maximal linearly semi-independent subset of $S$.
\item[(ii)] If $S_{i-1}$ is defined for $i\geq 1$, let $S_i$ be a maximal linearly semi-independent subset of $S\setminus\bigcup_{j=0}^{i-1}S_j$.
\end{enumerate}
Then there is some $m\geq 0$ such that $S=\bigcup_{i=0}^m S_i$, where each $S_i$ is symmetric, and in particular $|S_0|=2n$. Assume for $1\leq i\leq m$, $|S_i|=2n_i$. By the maximal linear semi-independence, for each $1\leq i\leq m$, let $k_i>0$ be the smallest natural number such that the scalar multiplication $k_i\langle S_i\rangle$ is contained in $\langle S_{i-1}\rangle$.

Let $\alpha=3^{n}m$ and $\beta=2\cdot 3\cdot \prod_{i=1}^mk_i$. Fix an arbitrary non-identity $s=(s_1,\dots, s_n)\in S_m$ and let $|s|=\sum_{k=1}^n|s_k|$.  For each $1\leq i\leq m$, since $\beta$ is a multiple of $\prod_{j=i+1}^mk_j$, we have $\beta s\in \bigcap_{j=i}^m \langle S_j\rangle$; in particular $\beta s\in\langle S_i\rangle$. Thus there exist $a_{i,1},\dots, a_{i,n_i}\in \mathbb{Z}$ such that $\beta s=a_{i,1}e_{i,1}+\cdots+a_{i,n_i}e_{i,n_i}$, where $S_i=\{\pm e_{i,1},\dots, \pm e_{i,n_i}\}$. Let $\gamma=\sup\{|a_{i,j}|\colon 1\leq i\leq m, 1\leq j\leq n_i\}$. Let $d=4\cdot 2(\gamma+1)(\alpha+1)(\beta+1)|s|+2$. 

Consider the Schreier graph $G_i=G(\langle S_i\rangle, S_i, F(2^{\Z^n}))$. Let $\rho_i=\rho_{S_i}$ be the path distance on $G_i$. We have $E(G)=\bigcup_{i=0}^m E(G_i)$. For each $0\leq i\leq m$, let $H_i$ be the Schreier graph on $F(2^{\Z^n})$ where $E(H_i)=\bigcup_{j=i}^m E(G_j)$, and let $C_i$ be a set of $2n_i$ many fresh colors, i.e., for $i\neq j$, $C_i\cap C_j=\varnothing$. Note that $H_i$ is generated by the actions of $\bigcup_{j=i}^m S_j$. Assume $0$ is a color distinct from any color in $\bigcup_{i=0}^m C_i$. Let $N_i=1+\sum_{j=i}^m 2n_j$. To define a continuous edge $(|S|+1)$-coloring on $G$, we use reverse induction on $i=m,\dots, 0$ to define a continuous edge $N_i$-coloring of $H_i$ with the colors in $\{0\}\cup\bigcup_{j=i}^m C_j$. When $i=0$ the resulting edge coloring is as required.

When $i=m$ we apply Proposition~\ref{nonstandard} to $S_m$ to obtain a relatively clopen subequivalence relation $R^m_d=R^{S_m}_d$ on $F(2^{\Z^n})$ and a continuous proper edge $N_m$-coloring of $H_m=G_m$. Let $\kappa_m$ denote this coloring. Then $\kappa_m$ uses colors in $\{0\}\cup C_m$. For each $R^m_d$-marker region, which is an $n_m$-dimensional rectangle with side lengths either $d$ or $d+1$, $\kappa_m$ satisfies both the boundary condition and the core condition. In particular, the boundary condition guarantees that $\kappa_m$ is a proper edge coloring of $G_m$. Here the core condition means that the color $0$ is only used in coloring some edges of the cores of $R^m_d$-marker regions. Let $K_m$ be the union of all cores of the $R^m_d$-marker regions. Then $K_m$ is a clopen subset of $F(2^{\Z^n})$. 

For the clarity of our proof, consider $i=m-1$ next. Now we apply Proposition~\ref{nonstandard} to $S_{m-1}$ to obtain a relatively clopen subequivalence relation $R^{m-1}_d=R^{S_{m-1}}_d$ on $F(2^{\Z^n})$ and a continuous proper edge $(2n_{m-1}+1)$-coloring of $G_{m-1}$. Let $\eta_{m-1}$ denote this coloring. Then $\eta_{m-1}$ uses colors in $\{0\}\cup C_{m-1}$. For each $R^{m-1}_d$-marker region, which is an $n_{m-1}$-dimensional rectangle with side lengths $d$ or $d+1$, $\eta_{m-1}$ satisfies both the boundary condition and the core condition. Here again the core condition means that the color $0$ is only used in coloring some edges of the cores of $R^{m-1}_d$-marker regions. Let $Q_{m-1}$ be the union of all cores of the $R^{m-1}_d$-marker regions. Then $Q_{m-1}$ is a clopen subset of $F(2^{\Z^n})$. 

Consider a particular $R^{m-1}_d$-marker region $R$ and its core $K$. We claim that one of the following $\alpha+1$ many sets
$$ K,\ \beta s\cdot K,\ 2\beta s\cdot K,\ \dots, \ \alpha\beta s\cdot K $$
has empty intersection with $K_m$. To see this, note that $\beta$ is a multiple of $k_m$, and hence $\beta s\in \langle S_{m-1}\rangle$. Since $K$ is an $n_{m-1}$-dimensional rectangle in $G_{m-1}$ with side lengths $2$, $\beta$ is a multiple of $3$ and $\frac{\beta}{3}s\in \langle S_{m-1}\rangle$, we have that the above $\alpha+1$ many sets are pairwise disjoint. Since $d>4(\alpha+1)\gamma\beta|s|$, all of the above $\alpha+1$ many sets are still contained in $R$. On the other hand, for each point $x\in K$, consider 
$$ x,\ \beta s\cdot x,\ 2\beta s\cdot x,\ \dots,\ \alpha\beta s\cdot x. $$
If any of these $\alpha+1$ many points is in $K_m$, say in the core of a particular $R^m_d$-marker region $R'$, then all of the displayed $\alpha+1$ many points are still in $R'$, and none of the other $\alpha$ many points are in $K_m$ anymore. This means that at most one of the displayed $\alpha+1$ many points can be in $K_m$. 
For every $x\in K$, let $I_x=\{i\beta s\cdot x\colon i\leq \alpha\}$. Now, since $|K|=3^{n_{m-1}}\leq 3^n\leq \alpha$, we have that if all these $\alpha+1$ many shifts of $K$ intersect $K_m$, as $\bigsqcup_{i\leq \alpha}i\beta s\cdot K=\bigsqcup_{x\in K}I_x$, at leat one $I_x$ would contain two points in $K_m$, a contradiction.

Now we define the coloring $\kappa_{m-1}$, which is an extension of $\kappa_m$. For each $R^{m-1}_d$-marker region $R$ and its core $K\subseteq Q_{m-1}$, we find the least integer $a$ such that $0\leq a\leq \alpha$ and $a\beta s\cdot K\cap K_m=\varnothing$. If $R$ has an side of length $d+1$ then we let $\kappa_{m-1}$ on $R$ and its adjacent edges to be $\eta_{m-1}$ on $R$ and its adjacent edges. Note that  only colors in $C_{m-1}$ are used in $\eta_{m-1}$. Otherwise, apply Lemma~\ref{lem:shiftedcore} to obtain a proper edge $(2n_{m-1}+1)$-coloring $\kappa_{m-1}$ on $R$ satisfying both the boundary condition and the $a\beta s$-shifted core condition for $R$. Again the shifted core condition means that the color $0$ is only used in coloring some edges of $a\beta s\cdot K$. This is possible since $d=4k+2$ for some $k$, $a\beta$ is even and $\frac{a\beta}{2}s\in\langle S_{m-1}\rangle$. The shifted core condition guarantees that no adjacent edges are both colored $0$ in the resulting $\kappa_{m-1}$. Furthermore, no adjacent edges are colored the same color in $\kappa_{m-1}$ since $C_{m-1}\cap C_m=\varnothing$. Together with this observation, the boundary condition in the definition guarantees that $\kappa_{m-1}$ is a proper edge $N_{m-1}$-coloring of $H_{m-1}$. It is clear from the definition that $\kappa_{m-1}$ is continuous. This finishes the definition of $\kappa_i$ as a continuous proper edge $N_i$-coloring of $H_i$ for $i=m-1$. To set up the general induction, we let $K_{m-1}$ be the union of all shifted cores of the $R^{m-1}_d$-marker regions as defined above. More specifically,
$$\begin{array}{rl} K_{m-1}=\bigcup\{a\beta s\cdot K\colon\!\!\!\! & \mbox{$K$ is a core of some $R^{m-1}_d$-marker region} \\
& \mbox{ and $0\leq a\leq \alpha$ is the least such that $a\beta s\cdot K\cap K_m=\varnothing$}\}.
\end{array} $$
Then $K_{m-1}$ is still a clopen subset of $F(2^{\Z^n})$. Note that the original cores in $Q_{m-1}$ can be shifted to at most $\gamma\alpha\beta|s|$ away to form $K_{m-1}$; however, since $d>4\cdot 2\gamma\alpha\beta|s|$, each shifted core in $K_{m-1}$ is still at least $\gamma\alpha\beta|s|$ away from the boundaries of the $R^{m-1}_d$-marker regions.

For the general $i<m-1$, assume inductively that we have defined a continuous proper edge $N_{i+1}$-coloring $\kappa_{i+1}$ of $H_{i+1}$. Also assume that for all $i+1\leq j\leq m$ we have defined relatively clopen subequivalence relations $R^j_d$ of $F(2^{\Z^n})$ and relatively clopen sets $K_j$ as shifted cores of $R^j_d$-marker regions. 

Apply Proposition~\ref{nonstandard} to $S_i$ to obtain a relatively clopen subequivalence relation $R^i_d=R^{S_i}_d$ on $F(2^{\Z^n})$ and a continuous proper edge $(2n_i+1)$-coloring of $G_i$. Let $\eta_i$ denote this coloring. Then $\eta_i$ uses colors in $\{0\}\cup C_i$. For each $R^i_d$-marker region, which is an $n_i$-dimensional rectangle with side lengths $d$ or $d+1$, $\eta_i$ satisfies both the boundary condition and the core condition. For a particular $R^i_d$-marker region $R$ and its core $K$, we claim that at least one of the $\alpha+1$ many sets 
$$ K,\ \beta s\cdot K,\ 2\beta s\cdot K,\ \dots, \ \alpha\beta s\cdot K $$
has empty intersection with $\bigcup_{j=i+1}^mK_j$. The argument for this claim is similar to the above proof, except that we now use the fact $\beta s\in\langle S_i\rangle$ to see that the above $\alpha+1$ many displayed sets are still in $R$. For any $x\in K$ consider the points
$$ x,\ \beta s\cdot x,\ 2\beta s\cdot x,\ \dots,\ \alpha\beta s\cdot x. $$
For any $j$ with $i+1\leq j\leq m$, if any of these points is in $K_j$, which is in some $R^j_d$-marker region $R'$, then by induction, the entire sequence is still at least $\gamma\alpha\beta|s|$ away from the boundary of $R'$, and therefore still contained in $R'$. It follows that at most one of these points can be in $K_j$, and thus at most $m$ many of these points can be in $\bigcup_{j=i+1}^mK_j$. Now since $|K|=3^{n_i}\leq 3^n$, there are at most $3^nm=\alpha$ many points in the displayed $\alpha+1$ many shifts of $K$ which can be in $\bigcup_{j=i+1}^m K_j$. It follows that there are at most $\alpha$ many of the displayed sets which can have nonempty intersection with $\bigcup_{j=i+1}^m K_j$. The claim again follows from the pigeonhole principle. 

We define the coloring $\kappa_i$ as an extension of $\kappa_{i+1}$. For each $R^i_d$-marker region $R$ and its core $K$, we find the least integer $a$ such that $0\leq a\leq \alpha$ and $a\beta s\cdot K\cap \bigcup_{j=i+1}^m K_j=\varnothing$. If $R$ has an side of length $d+1$ then we let $\kappa_i$ on $R$ and its adjacent edges to be $\eta_i$. Note that in this case only colors in $C_i$ are used. Otherwise, apply Lemma~\ref{lem:shiftedcore} to obtain a proper edge $(2n_i+1)$-coloring $\kappa_i$ on $R$ satisfying both the boundary condition and the $a\beta s$-shifted core condition for $R$. This is possible since $d=4k+2$ for some $k$ and $a\beta$ is even. The shifted core condition guarantees that no adjacent edges are both colored $0$ in the resulting $\kappa_i$. Furthermore, no adjacent edges are colored the same color in $\kappa_i$ since $C_i\cap C_j=\varnothing$ for all $i+1\leq j\leq m$. Together with this observation, the boundary condition in the definition guarantees that $\kappa_i$ is a proper edge $N_i$-coloring of $H_i$. It is clear from the definition that $\kappa_i$ is continuous. This finishes the definition of $\kappa_i$ as a continuous proper edge $N_i$-coloring of $H_i$. To continue the induction, we let $K_i$ be the set of all shifted cores of the $R^i_d$-marker regions as defined above. Then $K_i$ is still a clopen subset of $F(2^{\Z^n})$.

To complete the proof, note that $\kappa_0$ is a continuous proper edge $N_0$-coloring of $H_0$, where $H_0=F(S, 2^{\Z^n})$ and $N_0=|S|+1$.
\end{proof}

\end{document}